\author{\textsc{SIM\~AO CORREIA}}
\date{}
\title{ \Large\textbf{BLOWUP FOR THE NONLINEAR SCHRÖDINGER EQUATION WITH AN INHOMOGENEOUS DAMPING TERM IN THE $L^2$-CRITICAL CASE}}
\newtheoremstyle{mytheoremstyle} % name
    {\topsep}                    % Space above
    {\topsep}                    % Space below
    {}                   % Body font
    {}                           % Indent amount
    {\scshape}                   % Theorem head font
    {.}                          % Punctuation after theorem head
    {.5em}                       % Space after theorem head
    {}  % Theorem head spec (can be left empty, meaning ‘normal’)
\theoremstyle{mytheoremstyle} \newtheorem{nota}{Remark}
\theoremstyle{definition} 
\theoremstyle{plain}\newtheorem{teo}{Theorem}
\theoremstyle{plain} \newtheorem{lema}[teo]{Lemma}
\theoremstyle{plain} \newtheorem{cor}[teo]{Corollary}
\theoremstyle{plain} \newtheorem{prop}[teo]{Proposition}
\theoremstyle{mytheoremstyle} 
\newcommand{\real}{{\mathbb R}}
\newcommand{\complex}{{\mathbb C}}
\newcommand{\ub}{{\bar{u}}}
\newcommand \ben {\begin{equation}}
\newcommand \een {\end{equation}}
\newcommand \be {\begin{equation*}}
\newcommand \ee {\end{equation*}}
\newcommand \bi {\begin{itemize}}
\newcommand \ei {\end{itemize}}
\newcommand{\eum}{\epsilon_1}
\newcommand{\edois}{\epsilon_2}
\DeclareMathOperator*{\partere}{Re}
\DeclareMathOperator*{\parteim}{Im}
\begin{document}
\maketitle
\begin{abstract}
We consider the nonlinear Schrödinger equation with $L^2$-critical exponent and an inhomogeneous damping term. By using the tools developed by Merle and Raphael, we prove the existence of blowup phenomena in the energy space $H^1(\real)$.
\end{abstract}

\vskip10pt
\small{Keywords: Nonlinear Schr\"odinger equation; damping; blowup.
\vskip10pt
Mathematics Subject Classification 2010: 35Q41, 35B44}
\section{Introduction}
$\indent$ We consider the Cauchy problem for the nonlinear Schrödinger equation
\ben\label{cauchy}
\left\{\begin{array}{llr}
iu_t + u_{xx} + |u|^{4}u +iau =0,& (t,x)\in[0,\infty)\times\real& \mbox{(NLS}_a\mbox{)}\\
u(0,x)=u_0(x),\ u_0:\real\to\complex
\end{array}\right.
\een
with a real inhomogeneous damping term $a\in C^1(\real;\real)\cap W^{1,\infty}(\real;\real)$. This is the one-dimensional $L^2$-critical case of the equation
\ben\label{geral}
iu_t + \Delta u + |u|^{p-1}u +iau =0,
\een
with $1<p<1+4/(N-2)$ if $N\ge3$ and $1<p<\infty$ if $N=1,2$. The equation (\ref{geral}) arises in several areas of nonlinear optics and plasma physics. The inhomogenous damping term corresponds to an electromagnetic wave absorved by an inhomogenous medium (cf. \cite{b1}, \cite{b2}).

It is known that, if $u_0\in H^1(\real^N)$, the Cauchy problem for (\ref{geral}) is locally well-posed (see Cazenave \cite{b3}, theorem 4.4.6). Moreover, if $T_a(u_0)$ is the maximal time of existence for the solution $u(t)$, one has the blowup alternative: if $T_a(u_0)<\infty$, then $\|\nabla u(t)\|_2\to\infty$ when $t\to T$.

The case where $a$ is constant was studied in \cite{darwich}, \cite{ohtatodorova}, \cite{tsutsumi}. In the supercritical case ($1+4/N<p <1 + 4/(N-2)$), for sufficiently small damping and special initial data with negative energy, the blowup of the solution is proved. The proof of this result is based on the variance method introduced in \cite{glassey} and \cite{zakharov}. Such method does not seem to work on the $L^2$-critical case for $a>0$. Also, for $1<p<1+4/(N-2)$ and for all initial data in $H^1(\real^N)$, one proves the global existence of the solutions for sufficiently large damping. The critical case ($p=1+4/N$) was studied in \cite{darwich}, where one proves, for small dimensions, the existence of blowup phenomena for small damping. The technique used therein is strongly based on the works of Merle and Raphaël (\cite{merleraphael1}, \cite{merleraphael2}).

Regarding the equation with inhomogeneous damping, it has been recently proved in \cite{diasfigueira} the existence of blow-up phenomena in the supercritical case, under similar conditions to those of the homogeneous case. Here, we shall consider the critical exponent $p=1+4/N$ and we prove the following result:
\begin{teo}
There exists $\delta>0$ such that, for $\|a\|_{W^{1,\infty}}<\delta$, there exists $u_0\in H^1(\real)$ such that the solution of (\ref{cauchy}) blows up in finite time.
\end{teo}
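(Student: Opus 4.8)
The plan is to reproduce the log‑log blow‑up analysis of Merle and Raphaël, in the damped form developed by Darwich, treating the inhomogeneous term $iau$ as a perturbation: it is \emph{small} because $\|a\|_{W^{1,\infty}}<\delta$, but after the singular renormalization it appears at a singular scale, so the whole point is to check that every damping contribution is ultimately weighted by a factor $\lambda(t)^2$ and is therefore integrable once one passes to renormalized time. Concretely, one may fix a small $\eta>0$ and take $u_0=(1+\eta)Q$, \emph{independently of $a$}, where $Q>0$ is the even $L^2$-critical ground state, $Q''-Q+Q^5=0$. Using $E(Q)=0$ and the Pohozaev identity $\|Q'\|_2^2=\tfrac12\|Q\|_2^2$ one gets $E(u_0)=\tfrac14\|Q\|_2^2\big[(1+\eta)^2-(1+\eta)^6\big]<0$ and $\|Q\|_2<\|u_0\|_2=(1+\eta)\|Q\|_2<\|Q\|_2+\alpha^*$ for $\eta$ small, $\alpha^*$ being the Merle–Raphaël mass threshold; thus $u_0$ lies in the tube $\mathcal T$ of $H^1$ functions close — modulo scaling, phase and translation — to the soliton manifold, with negative energy.

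On the maximal existence interval, and for as long as $u(t)\in\mathcal T$, I would use the geometric decomposition $u(t,x)=\lambda(t)^{-1/2}\big(Q_{b(t)}+\varepsilon(t)\big)\big((x-x(t))/\lambda(t)\big)e^{i\gamma(t)}$ with the Merle–Raphaël approximate profiles $Q_b$ and the usual orthogonality conditions on $\varepsilon$, and pass to the rescaled time $s$, $ds/dt=\lambda^{-2}$. In the renormalized equation for $w=Q_b+\varepsilon$ the damping becomes $i\lambda^2 a(x(t)+\lambda y)\,w$; Taylor expanding $a(x(t)+\lambda y)=a(x(t))+O(\lambda\|a'\|_\infty)$ and using that $\lambda$ is small in $\mathcal T$, this perturbs the modulation equations for $(\lambda_s/\lambda,\,b_s,\,\gamma_s,\,x_s)$ only by terms of size at most $C\delta\lambda^2$, far below the thresholds $\Gamma_b$ already present (exponentially small in $b^{-1}$); so those equations are unchanged to the needed order. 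What genuinely changes is that mass and energy are no longer conserved: one computes $\tfrac{d}{dt}\|u\|_2^2=-2\int a|u|^2\,dx=O(\delta)$ and $\tfrac{d}{dt}E(u)=-\int a\big(|u_x|^2-|u|^6\big)\,dx-\mathrm{Re}\int a_x\,\overline{u_x}\,u\,dx$, where the first term is $O(\delta/\lambda^2)$ and the inhomogeneity contributes the new term, bounded by $\|a'\|_\infty\|u_x\|_2\|u\|_2=O(\delta/\lambda)$. In renormalized variables this reads $\tfrac{d}{ds}(\lambda^2E)=-2b\,\lambda^2E+O(\delta\lambda^2)$ and $\tfrac{d}{ds}\|u\|_2^2=O(\delta\lambda^2)$; since $\int_0^s\lambda^2\,ds'=t$ stays bounded by a fixed $T_0$ throughout the bootstrap, these corrections are $O(\delta T_0)$, so $\lambda^2E$ stays $O(\delta)$-close to $e^{-2\int b}\lambda^2E_0$ and $\|u(t)\|_2$ stays $O(\delta T_0)$-close to $\|u_0\|_2$, hence remains inside $(\|Q\|_2,\|Q\|_2+\alpha^*)$.

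The core of the argument is then to recover the Merle–Raphaël monotonicity. The localized virial inequality $\tfrac{d}{ds}(\text{local virial})\ge\delta_1\|\varepsilon\|_{\mathrm{loc}}^2-\Gamma_b$ and the companion local‑energy monotonicity each pick up damping corrections which, after localization and an integration by parts (the derivative that falls on $a'$ being absorbed by the exponential weight of the virial cutoff), are again of size at most $C\delta\lambda^2$. Combining them as in Merle–Raphaël yields a Lyapunov functional $\mathcal J(s)$ with $\tfrac{d\mathcal J}{ds}\le C\delta\lambda^2$, hence $\mathcal J(s)\le\mathcal J(0)+C\delta T_0$; together with the bootstrap assumption $\int_0^s\lambda^2\,ds'\le 2T_0$ (improved by this very bound) this is enough to run the bootstrap exactly as in the conservative case, giving that $u(t)$ never leaves $\mathcal T$, that $\lambda$ is almost‑decreasing with $b$ trapped in the log‑log window, and that $\|\varepsilon\|_{H^1_{\mathrm{loc}}}$ stays small. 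The modulation equation for $x$ gives $|x_t|\le C\lambda^{-2}\big(\|\varepsilon\|_{\mathrm{loc}}^2+\Gamma_b+\delta\lambda^2\big)$, whose integral in $t$ is finite, so $x(t)$ stays bounded and the decomposition is valid up to $s=\infty$. Finally $T:=\int_0^\infty\lambda^2\,ds<\infty$ while $\lambda(s)\to0$, so $u$ exists on $[0,T)$ with $\|u_x(t)\|_2\ge c\,\lambda(t)^{-1}\to\infty$; by the blow‑up alternative $T$ is the maximal time, i.e. $u$ blows up in finite time.

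The main obstacle is exactly the new term $\mathrm{Re}\int a_x\,\overline{u_x}\,u\,dx$ in $\tfrac{d}{dt}E$: it is only $O(\delta/\lambda)$, hence individually more singular than anything in Darwich's homogeneous analysis, and the same issue reappears inside the localized virial and the local‑energy functional. The resolution is structural: in the renormalized variables in which the Merle–Raphaël Lyapunov functional is monotone, every damping term comes multiplied by $\lambda^2$, and since $dt=\lambda^2\,ds$ its net effect on the monotonicity over the whole life of the solution is $O\big(\delta\int_0^T dt\big)=O(\delta T_0)$, which is made arbitrarily small by taking $\delta$ small. A secondary, genuinely new point is that the $x$‑dependence of $a$ breaks the translation invariance used freely in the conservative analysis, so the translation parameter $x(t)$ really evolves; but its speed is integrable in $t$, so the value $a(x(t))$ felt by the bubble stays in a compact set, which is harmless.
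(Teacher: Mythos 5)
The mechanism you identify as the heart of the matter --- every damping contribution enters the renormalized dynamics with a factor $\lambda^2$, so that in the $s$-variables its net effect over the whole evolution is $O(\delta\cdot\int\lambda^2\,ds)=O(\delta\,T_0)$, including the genuinely new term $\mathrm{Re}\int a_x\,\overline{u_x}\,u\,dx$ of size $O(\delta/\lambda)$ --- is indeed the mechanism used in the paper (there it appears as the control of $E(t)\lambda^{3/2}(t)$, of the momentum term $(\epsilon_2,Q_y)$, and of the $\|a\|_\infty\lambda^2$ corrections to the virial inequality). But your proposal has a genuine gap at its foundation: you take $u_0=(1+\eta)Q$, independently of $a$, and then run the $Q_b$/log--log bootstrap ``exactly as in the conservative case'' from $s=0$. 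That data has $b(0)=0$ and $\lambda(0)\sim 1$, so it is nowhere near the log--log regime: your comparison ``damping corrections are far below the thresholds $\Gamma_b$'' fails at the initial phase, where $\Gamma_{b(0)}$ is essentially zero, and the statement that negative-energy data eventually enters the log--log regime is itself a substantial theorem of Merle--Rapha\"el whose proof uses conservation of mass and energy and therefore cannot simply be quoted for the damped flow. Relatedly, the a priori bound $T_0$ on $\int_0^s\lambda^2\,ds'=t$, which is what makes all your error terms $O(\delta T_0)$ and dictates how small $\delta$ must be, is only postulated as a bootstrap hypothesis; you never indicate how it is improved, and for data at unit scale it cannot be improved without first proving that the damped solution reaches small $\lambda$ --- which is precisely what is at stake.

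The paper closes exactly this hole through the choice of data and the structure of the bootstrap: $u_0:=\tilde u(t^*)$, where $\tilde u$ is an undamped negative-energy, zero-momentum solution evaluated at $t^*$ close to its blow-up time, so that at $t=0$ one already has $\lambda(0)=m$ small, $(\epsilon_2,Q_d)(0)>0$ and $\lambda(0)^2\le e^{-B/(\epsilon_2,Q_d)^2(0)}\le\|\epsilon(0)\|^8$; and the bounded lifespan is built in structurally via (H1) and (H3) (dyadic times with $t_{k+1}-t_k\le\lambda^{3/2}(t_k)$, whose sum is $<2$ independently of everything), so that $\delta$ can be chosen independently of $T$ and the bootstrap closes by improving (H2)--(H5). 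If you wish to keep $(1+\eta)Q$, you must add a step transporting the damped solution into the blow-up regime (e.g.\ continuous dependence on $[0,T_{\max}-\tau]$ against the undamped flow, which effectively reconstructs the paper's choice of data, or an adaptation of the entry argument of \cite{merleraphael1} with damping). Note also that your route is Darwich's \cite{darwich}: full $Q_b$ profiles, local virial plus local energy monotonicity, Strichartz-type ingredients, with the log--log rate as reward; the paper deliberately avoids this machinery, using only the cruder decomposition of \cite{merleraphael1} and a dyadic-time bootstrap, at the price of obtaining only the $|\log(T-t)|^{1/4}/\sqrt{T-t}$ upper bound. Your plan is viable in principle along Darwich's lines, but as written the entry-into-regime and lifespan issues are real gaps, not details.
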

\begin{nota}
The result is stated in dimension one. We conjecture that it can be extended to higher dimensions (see \cite{merleraphael1}, \cite{darwich}).
\end{nota}
As a consequence of the technique used to prove the existence of blowup, we can prove an upper bound on the blow-up rate:
\begin{cor}
The explosive solution $u$ constructed in theorem 1 satisfies
$$
\|u_x\|_2\le C^*\frac{|\log(T-t)|^{1/4}}{\sqrt{T-t}}, \mbox{ for } t \mbox{ close to } T,
$$
where $C^*$ is an universal constant.
\end{cor}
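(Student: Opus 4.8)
The plan is to extract the bound from the quantitative control already produced while proving Theorem 1. Recall that, following Merle--Raphaël, the blow-up solution $u$ is built so that on a left neighbourhood $[t_1,T)$ of its blow-up time it admits the geometric decomposition
\[
u(t,x)=\frac{1}{\lambda(t)^{1/2}}\,(Q_{b(t)}+\epsilon)\!\left(t,\frac{x-x(t)}{\lambda(t)}\right)e^{i\gamma(t)},
\]
where $\lambda(t)\searrow 0$ and $b(t)>0$ as $t\to T$, $\|\epsilon(t)\|_{H^1}\le\eta$ for a fixed small $\eta$, and the rescaled time $s$, defined by $ds/dt=\lambda^{-2}$, satisfies $s(t)\to+\infty$. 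Since $Q_b\to Q$ in $H^1$ as $b\to 0$, this gives $\|u_x(t)\|_2=\lambda(t)^{-1}(\|Q'\|_2+o(1))$, so the Corollary is equivalent to the lower bound $\lambda(t)\ge c\,(T-t)^{1/2}|\log(T-t)|^{-1/4}$ near $T$. The proof of Theorem 1 moreover supplies: (i) the modulation estimate $|\lambda_s/\lambda+b|\le\eta b$, so that $-\lambda_s/\lambda\ge b/2$; and (ii), by integrating the local virial inequality $b_s\ge-\Gamma_b$ (with $\Gamma_b$ exponentially small in $1/b$) from a time $s_1$ with $b(s_1)>0$, the lower bound $b(s)\ge c_1/\log s$ for $s$ large. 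Only (i), (ii) and the decomposition will be used.

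First I would integrate (i)--(ii): from $-\lambda_s/\lambda\ge c_1/(2\log s)$ and the fact that $\int^{s}d\sigma/\log\sigma$ grows like $s/\log s$, one gets $\lambda(s)\le e^{-c_2 s/\log s}$ for $s$ large. Then I would bound the remaining life-span by writing $T-t(s)=\int_{s}^{\infty}\lambda(\sigma)^{2}\,d\sigma$ and estimating the integrand through
\[
\frac{\lambda(\sigma)^{2}}{\lambda(s)^{2}}=\exp\!\left(2\int_{s}^{\sigma}\frac{\lambda_\tau}{\lambda}\,d\tau\right)\le\exp\!\left(-c_1\int_{s}^{\sigma}\frac{d\tau}{\log\tau}\right);
\]
splitting the $\sigma$-integral at $\sigma=s^{2}$ (on $[s,s^{2}]$ bound $\log\tau\le 2\log s$ and integrate the exponential; on $[s^{2},\infty)$ use $\tau/\log\tau\ge\tau^{1/2}$) gives $T-t(s)\le C\,\lambda(s)^{2}\log s$.

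To close, take logarithms in $T-t\le C\lambda^{2}\log s$ and insert $\log\lambda\le -c_2 s/\log s$: this yields $|\log(T-t)|\ge c_2\,s/\log s$ for $s$ large. Combined with $(T-t)/\lambda(t)^{2}\le C\log s$ and with $(\log s)^{3}=o(s)$, one obtains, for $t$ close to $T$ and $C^{*}$ chosen suitably,
\[
\frac{T-t}{\lambda(t)^{2}}\le C\log s\le C\left(\frac{s}{\log s}\right)^{1/2}\le (C^{*})^{2}|\log(T-t)|^{1/2},
\]
i.e. $\lambda(t)\ge (C^{*})^{-1}(T-t)^{1/2}|\log(T-t)|^{-1/4}$. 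Feeding this into $\|u_x(t)\|_2=\lambda(t)^{-1}(\|Q'\|_2+o(1))$ and renaming $C^{*}$ to absorb $\|Q'\|_2+o(1)$ gives the claimed bound.

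The only role of the inhomogeneous damping in this argument is to make sure that inputs (i) and (ii) remain valid, i.e. that the modulation equation for $\lambda_s/\lambda$ and the local virial inequality still hold; this is the main point to verify, and it follows because, in the renormalised variables, the term $iau$ perturbs the pertinent virial and modulation identities only by quantities of size $O(\|a\|_{W^{1,\infty}}\lambda^{2})$, which are absorbed once $\|a\|_{W^{1,\infty}}<\delta$. I expect no essential difficulty beyond this bookkeeping, since the substantive analysis --- the spectral structure of the linearised operator around $Q$, the construction of the profiles $Q_b$, and the bootstrap --- is already contained in the proof of Theorem 1. Note finally that the weight $|\log(T-t)|^{1/4}$, rather than the sharp $(\log|\log(T-t)|)^{1/2}$ of Merle--Raphaël, is dictated precisely by the fact that only the lower bound $b(s)\ge c_1/\log s$ is used (and, in the damped setting, readily available), not the matching upper bound.
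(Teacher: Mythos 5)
Your final step --- inverting a bound of the form $T-t\lesssim \lambda(t)^2\,(\text{log factor})$ to get $\lambda(t)\gtrsim \sqrt{T-t}\,|\log(T-t)|^{-1/4}$ and hence the claim via $\|u_x(t)\|_2\approx \|Q\|_2/\lambda(t)$ --- is the same move the paper makes. The gap is in the inputs you claim are ``supplied by the proof of Theorem 1''. This paper never introduces the profiles $Q_b$, the parameter $b$, a pointwise modulation estimate $|\lambda_s/\lambda+b|\le\eta b$, or a virial inequality $b_s\ge-\Gamma_b$ with $\Gamma_b$ exponentially small in $1/b$; those belong to the sharp log-log machinery of Merle--Rapha\"el \cite{merleraphael2} and Darwich \cite{darwich}, which the paper explicitly declines to reproduce for (NLS$_a$) (no Strichartz estimates, only the log bound). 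What Theorem 1's proof actually provides is cruder: the decomposition around $Q$ itself; the \emph{integrated}, two-sided relation \eqref{desigualdadeintegralamort} between $\int(\edois,Q_d)\,ds$ and $\log\lambda$ (with an additive error $C(\delta_0)\delta(\alpha)$, not a pointwise ODE for $\lambda_s/\lambda$); and the differential inequality $f_s+C'f^4\ge0$ for $f\simeq(\edois,Q_d)$, which integrates only to $(\edois,Q_d)(s)\gtrsim s^{-1/3}$ --- much weaker than your assumed $b(s)\ge c_1/\log s$. Obtaining $b_s\ge-\Gamma_b$ (and hence the $1/\log s$ bound) for the inhomogeneously damped equation would require constructing the $Q_b$ profiles and the refined radiative virial estimate in that setting; this is not the ``$O(\|a\|_{W^{1,\infty}}\lambda^2)$ bookkeeping'' you describe, but precisely the analysis the paper avoids. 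So, as written, your proof rests on statements not established in this setting.

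The corollary does follow from what is actually proved, and that is the paper's route: since $X=[0,T_a(u_0))$, condition (\~{H}3) gives $t_{k+1}-t_k\le D|\log(\lambda(0)^\sigma\lambda(t_k))|^{1/2}\lambda^2(t_k)$ at the dyadic times $\lambda(t_k)=2^{-k}$; this comes from Proposition 11, i.e. $(\edois,Q_d)(s)\ge B^*|\log(\lambda(0)^\sigma\lambda(s))|^{-1/2}$ (a bound in terms of $\lambda$, not of $s$), combined with \eqref{desigualdadeintegralamort} and quasi-monotonicity. Summing over $k\ge n$ yields $T-t_n\le C\lambda^2(t_n)|\log\lambda(t_n)|^{1/2}$, and the monotonicity near $0$ of $g(x)=x^2|\log(x/2)|^{1/2}$ then gives the stated rate. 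If you wish to keep your continuous-in-$s$ computation, replace your inputs (i)--(ii) by these available estimates: the right-hand inequality in \eqref{desigualdadeintegralamort} together with $(\edois,Q_d)\gtrsim|\log\lambda|^{-1/2}$ (equivalently, $(\edois,Q_d)\gtrsim s^{-1/3}$ plus the integral inequality) already produces $T-t\lesssim\lambda^2|\log\lambda|^{1/2}$, after which your inversion argument goes through essentially unchanged.
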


As it was said before, the variance method does not seem to work in the critical case for the damped equation.  However, another method to prove the blow-up of
certain solutions of equation (1) in the case $a=0$ was introduced by
Merle and Raphael in \cite{merleraphael1}, based on the so-called {\it geometric
decomposition}
technique. The main goal of this method was to obtain an
upper bound on the blow-up rate, similar to the one presented in Corollary 2,
which was improved in \cite{merleraphael2} with a sharp upper bound estimate
(the log log upper bound). In \cite{darwich}, an extension of such a technique was made to the case where $a$ is a positive constant function, thus obtaining the first blow-up result for this critical case. Here, despite an inspiration on the arguments presented in \cite{darwich}, we do not follow the same steps. A simplification is made, to make the method used clearer. For example, in this proof, we shall not use Strichartz estimates, which were of particular importance in \cite{darwich}. One advantage is that the proof of blowup using this technique is done in a simpler way, which also implies a simplification of the proof of the upper bound on the blow-up rate. The disadvantage is that, while in \cite{darwich} one proves the log-log upper bound for the particular solution previously constructed, here we shall only prove the log upper bound.

We now recall some important invariances in the energy space $H^1(\real)$ for the nonlinear Schrödinger equation
\ben
iu_t + \Delta u + |u|^{4/N}u=0
\een
namely:
\bi
\item Mass (or charge): $C(u)=\|u\|^2_2=C(u_0)$;
\item Energy: $E(u)=\frac{1}{2}\|\nabla u\|_2^2 - \frac{1}{p+1}\|u\|_{p+1}^{p+1}=E(u_0)$;
\item Linear momentum: $M(u)=\parteim \int_{\real^N} \ub\nabla u=M(u_0)$;
\ei
For the (NLS$_a$) equation, these quantities are no longer conserved and one obtains the following evolution laws:
\bi
\item Mass evolution law:
\ben
\frac{d}{dt}C(u(t)) + 2\int_{\real^N} a(x)|u(t,x)|^2dx = 0;
\een
\item Energy evolution law:
\begin{align}
\frac{d}{dt}E(u(t))&= -\int_{\real^N}a(x)|\nabla u(t,x)|^2dx + \int_{\real^N}a(x)|u(t,x)|^{p+1}dx \\&\quad- \partere \int_{\real^N} \left(\nabla u(t,x)\cdot \nabla a(x)\right)\ub(t,x)dx;\nonumber
\end{align}
\item Linear momentum evolution law:
\ben
\frac{d}{dt}M(u(t)) + 2\int_{\real^N} a(x)\parteim\nabla u(t,x)\ub(t,x) dx = 0.
\een
\ei
Note that, from the mass evolution law, one has
\ben\label{controlomassa}
\|u_0\|_2\ e^{-\|a\|_\infty t}\le \|u(t)\|_2 \le \|u_0\|_2\ e^{\|a\|_\infty t}.
\een

The rest of this paper in organized as follows: in section 2, we will make a brief presentation of the technique used in \cite{merleraphael1}, highlighting the main steps. In section 3, a general idea of the proof is given, followed by its demonstration and, at the end, the log upper bound will be proved.

\begin{section}{The geometric decompositon method}
$\indent$ In this section, we shall consider the case where $a\equiv0$ and $N=1$,
\ben\tag{NLS}
iu_t+u_{xx} + |u|^4u=0.
\een

In this context, one may look for time-periodic solutions of the form $u(t,x)=e^{it}\phi(x)$. Inserting this expression in (NLS), we obtain the equation satisfied by $\phi$:
$$
-\phi_{xx}+\phi=|\phi|^4\phi.
$$
As proved in \cite{b3}, section 8.1, the above equation has non-trivial solutions in the energy space $H^1(\real)$. Futhermore, all the solutions are of the form 
%Futhermore, if one defines the action functional $S(u):=2E(u)+C(u)$, all the solutions with minimal action are of the form
$$
\phi(x)=e^{i\omega}Q(x-y),\ \omega,y\in\real,
$$
where $Q:\real\to\real$ is a positive decreasing radial function with exponential decay at infinity called the ground-state associated with (NLS). One may also prove (see \cite{b3}, section 8.4) that the ground-state is the only function (modulo translations and multiplication by a complex exponential) which minimizes the functional
$$
G(u)=\frac{\|\nabla u\|_2^2 \|u\|_2^{4}}{\|u\|_{6}^{6}}, \ u\neq 0.
$$
We define $Q_d=\frac{1}{2}Q+yQ_y$ and $Q_{dd}=\frac{1}{2}Q_d+y\left( Q_d\right)_y$. Moreover, we write the inner product in $L^2(\real)$ as $(\cdot,\cdot)$.

Consider a continuous function $u:[0,T]\to H^1(\real)$. From the variational characterization of the ground-state, it is proved in \cite{merleraphael1}, lemma 1, that, for small $\alpha$, if $0<\|u(t)\|_2^2-\|Q\|_2^2<\alpha$ and $E(u(t))\le \alpha\|u_x(t)\|_2^2$, there exist $C^1$ functions $x,\theta:[0,T]\to\real$ and $\lambda:[0,T]\to\real^+$ such that
\ben
\left| 1-\lambda(t)\frac{\| u_x(t)\|_2}{\|Q\|_2}\right| < \delta(\alpha)
\een
and
\ben
\|\lambda(t)^{1/2}e^{i\theta(t)}u(\lambda(t)(\cdot-x(t)),t)-Q\|_{H^1(\real)} < \delta(\alpha),
\een
where $\delta(\alpha)>0$ satisfies $\delta(\alpha)\to 0$ when $\alpha\to 0$. We define 
\ben\label{epsilon}
\epsilon(t)=\lambda(t)^{1/2}e^{i\theta(t)}u(\lambda(t)(\cdot-x(t)),t)-Q.
\een
The set of the functions $x,\theta,\lambda$ and $\epsilon$ is called a \textit{geometric decomposition} of $u$.

If $u_0\in H^1(\real)$ satisfies $0<\|u_0\|_2^2-\|Q\|_2^2<\alpha$ and $E(u_0)<0$, then, by the conservation of charge and energy, the corresponding solution  of (NLS) satisfies the conditions for the geometric decomposition. Therefore, one may write (NLS) using $\epsilon$ and the change  temporal variable
\be
s(t)=\int_0^t\frac{1}{\lambda^2(\tau)}d\tau,
\ee
thus obtaining the following system:
\begin{align}\label{eqe}
\partial_s\epsilon_1 - L_-\epsilon_2 &= \frac{\lambda_s}{\lambda}Q_d + \frac{x_s}{\lambda}Q_y + \frac{\lambda_s}{\lambda}(\epsilon_1)_d + \frac{x_s}{\lambda}(\epsilon_1)_y + \tilde{\theta_s}\epsilon_2 - R_2(\epsilon)\\\label{eqe2}
\partial_s\epsilon_2 + L_+\epsilon_1 &= -\tilde{\theta_s}Q - \tilde{\theta_s}\epsilon_1 + \frac{\lambda_s}{\lambda}(\epsilon_2)_d + \frac{x_s}{\lambda}(\epsilon_2)_y + R_1(\epsilon).
\end{align}
where $\tilde{\theta}_s=-1-\theta_s$, $\epsilon=\eum+i\edois$,  $L_+=-\Delta + 1 - 5Q^4$, $L_-=-\Delta + 1 - Q^4$ and $R_1(\epsilon), R_2(\epsilon)$ are formally quadratic in $\epsilon$. The operator $L=(L_+,L_-)$ is the linear operator close to the ground-state, which has been studied in \cite{weinstein}. Therein, there are proved the following identities:
$$
L_+(Q_d)=-2Q; \quad L_+(Q_y)=0; \quad L_-(Q)=0;
$$
$$
 L_-(yQ)=-2Q_y; \quad L_-(y^2Q)=-4Q_d.
$$
These properties are essential to make the estimates on the parameters of the geometric decompositon.

One can also write the linear momentum of $u$ as a function of $\epsilon$:
\ben\label{momento}
M(u(t)) = \frac{1}{\lambda}\left[\parteim\left(\int_\real \epsilon_y(t)\bar{\epsilon}(t)dx\right) - 2(\edois,Q_y)\right].
\een
By choosing $u_0\in H^1(\real)$ such that $M(u_0)=0$, then $M(u(t))\equiv 0$ and one has easily 
\ben
|(\edois, Q_y)(t)|\le \delta(\alpha)\|\epsilon(t)\|,
\een
 where $\|\epsilon\|^2=\int_\real |\epsilon_y|^2 + \int_\real |\epsilon|^2e^{-2^-|y|}$ and $2^-$ is a positive constant smaller than $2$ related with the properties of $Q$. This inequality is essential in the following results.

Now, if we choose a geometric decompositon (this choice can be made using the implicit function theorem) such that $\epsilon$ satisfies $(\epsilon_1,Q_d)=0;\ (\epsilon_2,Q_{dd})=0;\ (\epsilon_1,yQ)=0$ (the so-called ortogonality conditions), then it is possible to obtain
\ben\label{virialtransformado}
\left[ \left(1+\frac{1}{4\delta_0}(\epsilon_1,Q)\right)(\epsilon_2,Q_d)\right]_s\ge \delta_0\|\epsilon\|^2 + 2\lambda^2|E_0| - \frac{1}{\delta_0}(\epsilon_2,Q_d)^2.
\een
where $\delta_0$ is a positive constant.

By using the equations \eqref{eqe}, \eqref{eqe2} and the inequality (\ref{virialtransformado}), we derive the following result:
\begin{lema}
For $\alpha_1>0$ small, there exists $s_0>0$ such that
\be
(\epsilon_2,Q_d)(s)>0,\ \forall s>s_0.
\ee
Moreover, if $s_2>s_1>s_0$,
\ben\label{desigualdadeintegral}
3\int_{s_1}^{s_2}(\edois,Q_d) ds - C(\delta_0)\delta(\alpha)\le -\|yQ\|_2^2\log \frac{\lambda(s_2)}{\lambda(s_1)} \le 5\int_{s_1}^{s_2}(\edois,Q_d) ds + C(\delta_0)\delta(\alpha)
\een
and one has the quasi-monotony property
\ben\label{quasemonotonia}
\lambda(s_2)<2\lambda(s_1).
\een
\end{lema}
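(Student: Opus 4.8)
The plan is to deduce all three assertions from the virial-type inequality \eqref{virialtransformado} and the modulation system \eqref{eqe}--\eqref{eqe2}, the hypotheses $E_0 := E(u_0) < 0$, $\|u_0\|_2^2 > \|Q\|_2^2$ and $M(u_0) = 0$ entering through the conservation laws. Write $f = \bigl(1 + \tfrac{1}{4\delta_0}(\epsilon_1,Q)\bigr)(\epsilon_2,Q_d)$; since $|(\epsilon_1,Q)| \le C\delta(\alpha)$, for $\alpha$ small the prefactor is close to $1$, so $f$ and $(\epsilon_2,Q_d)$ have the same sign and comparable modulus, and \eqref{virialtransformado} reads $f_s \ge \delta_0\|\epsilon\|^2 + 2\lambda^2|E_0| - \tfrac1{\delta_0}(\epsilon_2,Q_d)^2$. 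For the positivity, the key point is that at any $s_\ast$ with $f(s_\ast) = 0$ one also has $(\epsilon_2,Q_d)(s_\ast) = 0$, hence $f_s(s_\ast) \ge 2\lambda(s_\ast)^2|E_0| > 0$ since $E_0 < 0$; so $f$ can only cross the value $0$ from below and $\{f > 0\}$ is forward invariant. It then remains to produce one instant where $f \ge 0$: using $|(\epsilon_2,Q_d)| \le C\|\epsilon\|$ to absorb $\tfrac1{\delta_0}(\epsilon_2,Q_d)^2$ into $\delta_0\|\epsilon\|^2$, \eqref{virialtransformado} yields $f_s \ge 2\lambda^2|E_0| > 0$, so $f$ is strictly increasing; since $|f| \le C\delta(\alpha)$ is bounded, while integrating the inequality gives $f(s) \ge f(0) + 2|E_0|\int_0^s\lambda^2\,d\sigma$ and $\int_0^s\lambda^2\,d\sigma$ equals the elapsed physical time, a continuity argument combined with the conservation laws (which, via the coercivity of $L$, prevent $\lambda$ from spending all its time near $0$ while $(\epsilon_2,Q_d) \le 0$) forces $f$ to become positive at some finite $s_0$. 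I regard this step as the main obstacle: balancing \eqref{virialtransformado}, the sign of $E_0$ and the conservation laws so that $(\epsilon_2,Q_d)$ really cannot remain non-positive — in particular the absorption of the $(\epsilon_2,Q_d)^2$ term — is the delicate part.

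For the $\lambda$-law and the integral inequality, I would test \eqref{eqe} against $y^2Q$: using Weinstein's identity $L_-(y^2Q) = -4Q_d$ together with the elementary computations $(Q_d,y^2Q) = -\|yQ\|_2^2$ and $(Q_y,y^2Q) = 0$ gives the pointwise identity
\[
-\|yQ\|_2^2\,\frac{\lambda_s}{\lambda} = 4(\epsilon_2,Q_d) + \partial_s(\epsilon_1,y^2Q) + \mathcal{E},
\]
where $\mathcal{E}$ gathers the contributions of $\tfrac{\lambda_s}{\lambda}(\epsilon_1)_d$, $\tfrac{x_s}{\lambda}(\epsilon_1)_y$, $\tilde\theta_s\epsilon_2$ and $R_2(\epsilon)$ tested against $y^2Q$, and is $O(\|\epsilon\|^2)$ once the routine modulation bounds $|\lambda_s/\lambda| + |x_s/\lambda| + |\tilde\theta_s| \le C\|\epsilon\|$ are available. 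In parallel, integrating \eqref{virialtransformado} on $[s_1,s_2]$, using $|f| \le C\delta(\alpha)$, discarding the favourably-signed term $2\int\lambda^2|E_0|$, and absorbing $\tfrac1{\delta_0}\int(\epsilon_2,Q_d)^2$ as before, one gets $\int_{s_1}^{s_2}\|\epsilon\|^2\,ds \le C(\delta_0)\delta(\alpha)$. Integrating the $\lambda$-law over $[s_1,s_2]$, the boundary term $(\epsilon_1,y^2Q)(s_2) - (\epsilon_1,y^2Q)(s_1)$ is $O(\delta(\alpha))$ (since $|(\epsilon_1,y^2Q)| \le C\|\epsilon\| < C\delta(\alpha)$) and $\int\mathcal{E}$ is $O(\delta(\alpha))$ by the previous bound, whence
\[
-\|yQ\|_2^2\log\frac{\lambda(s_2)}{\lambda(s_1)} = 4\int_{s_1}^{s_2}(\epsilon_2,Q_d)\,ds + O\bigl(\delta(\alpha)\bigr).
\]
Since $s_1 > s_0$ gives $(\epsilon_2,Q_d) > 0$ on $[s_1,s_2]$, the additive error is absorbed into the gap between the coefficients $3$ and $5$, which is \eqref{desigualdadeintegral}.

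Finally, the quasi-monotonicity follows at once: the left inequality in \eqref{desigualdadeintegral} and $(\epsilon_2,Q_d) > 0$ give $-\|yQ\|_2^2\log(\lambda(s_2)/\lambda(s_1)) \ge -C(\delta_0)\delta(\alpha)$, hence $\lambda(s_2) \le \lambda(s_1)\exp\bigl(C(\delta_0)\delta(\alpha)/\|yQ\|_2^2\bigr) < 2\lambda(s_1)$ as soon as $\alpha_1$ is small enough. The modulation computation behind the $\lambda$-law, though somewhat long, is standard once Weinstein's identities for $L_\pm$ are used systematically; the real difficulty is concentrated in the positivity step.
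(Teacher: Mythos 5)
Your outline of the quasi-monotonicity step and of the $\lambda$-law (testing \eqref{eqe} against $y^2Q$, using $L_-(y^2Q)=-4Q_d$, $(Q_d,y^2Q)=-\|yQ\|_2^2$, $(Q_y,y^2Q)=0$, then integrating) is the standard Merle--Rapha\"el route, which is indeed what this paper invokes: note the paper does not prove this lemma at all, it quotes it from \cite{merleraphael1}. But your central step has a genuine gap. The absorption ``$\frac{1}{\delta_0}(\edois,Q_d)^2\le\delta_0\|\epsilon\|^2$'' is not available: the only bounds at hand are $|(\edois,Q_d)|\le C\|\epsilon\|$ with $C$ a \emph{fixed} constant (of the size of $\|Q_d\|$ in the weighted norm, in no way dominated by the small coercivity constant $\delta_0$), and $|(\edois,Q_d)|\le C\delta(\alpha)$, which together give only $(\edois,Q_d)^2\le C\delta(\alpha)\|\epsilon\|$ --- not $\le\delta_0^2\|\epsilon\|^2$ when $\|\epsilon\|\ll\delta(\alpha)$. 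A sanity check that this step is too good to be true: if it held, then $f_s\ge 2|E_0|\lambda^2$ globally, and since $|f|\le C\delta(\alpha)$ and $\int_0^s\lambda^2\,d\sigma$ is the elapsed physical time, one would conclude $T_{max}\le C\delta(\alpha)/|E_0|$ in one line, making the whole refined analysis of Merle--Rapha\"el (the $\tilde\epsilon$ decomposition and the $f_s+Cf^4$ inequality of Lemma \ref{refinada}, which exist precisely because the quadratic term cannot be absorbed) pointless. Consequently your proof that $f$ must \emph{become} positive at some finite $s_0$ is not established; you flag it yourself as ``the delicate part'', and the vague appeal to ``a continuity argument combined with the conservation laws'' does not fill it. What you do prove correctly is the crossing/forward-invariance statement: at a zero of $f$ one has $(\edois,Q_d)=0$, the troublesome term vanishes, and \eqref{virialtransformado} gives $f_s\ge\delta_0\|\epsilon\|^2+2\lambda^2|E_0|>0$. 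That is exactly the argument this paper uses later (Lemma 9), but note the paper avoids the onset problem altogether by \emph{choosing} data with $(\edois,Q_d)(0)>0$ (a late-time snapshot of an undamped blow-up solution); the onset for general negative-energy data is the nontrivial part of \cite{merleraphael1} and is missing from your argument.

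The same invalid absorption reappears in your derivation of \eqref{desigualdadeintegral}: you use it to get $\int_{s_1}^{s_2}\|\epsilon\|^2\,ds\le C(\delta_0)\delta(\alpha)$ and hence a purely additive $O(\delta(\alpha))$ error, i.e.\ the identity with coefficient exactly $4$, which is stronger than the lemma. The fixable version is: for $s>s_0$ use $(\edois,Q_d)^2\le C\delta(\alpha)(\edois,Q_d)$, so that integrating \eqref{virialtransformado} gives
\be
\int_{s_1}^{s_2}\|\epsilon\|^2\,ds\le C(\delta_0)\delta(\alpha)+C(\delta_0)\delta(\alpha)\int_{s_1}^{s_2}(\edois,Q_d)\,ds,
\ee
and it is precisely the part of the error proportional to $\delta(\alpha)\int(\edois,Q_d)$ --- which need not be small in absolute terms when $\lambda(s_2)/\lambda(s_1)$ is small --- that the gap between the coefficients $3$ and $5$ is designed to absorb; an additive $O(\delta(\alpha))$ error needs no such gap. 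With that correction, your treatment of the integral inequality and the deduction $\lambda(s_2)\le\lambda(s_1)\exp(C(\delta_0)\delta(\alpha)/\|yQ\|_2^2)<2\lambda(s_1)$ are sound; the positivity onset remains the unproven core.
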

From the inequalities (\ref{virialtransformado}), (\ref{desigualdadeintegral}) and (\ref{quasemonotonia}), one proves that  $\lim_{t\to T_{max}} \|\nabla u(t)\|_2=\infty$ (or, equivalently, $\lim_{t\to T_{max}} \lambda(t)=0$), where $T_{max}$ is the maximal time of existence of the solution of (NLS). By using a refinement of the geometric decomposition, in which one introduces  $\tilde{\epsilon}=\epsilon+i\frac{(\edois,Q_d)}{\|yQ\|_2^2}W$, where $W=y^2Q+\nu Q$ and $\nu$ is such that $(W,Q_{dd})=0$, one obtains the following:
\begin{lema}\label{refinada}

1. There exist universal constants $\tilde{\delta}_0>0$ and $C>0$ such that, for small $\alpha>0$, there exists $\tilde{s}_1$ verifying
\ben\label{refinado}
\left[\left(1+\frac{(\eum, W_d)}{\|yQ\|_2^2}\right)(\edois,Q_d)\right]_s + C(\edois,Q_d)^4 \ge \tilde{\delta}_0\|\tilde{\epsilon}\|^2 + \lambda^2|E_0|, \ \forall \ s>\tilde{s}_1
\een
2. There exists a universal constant $B>0$ such that, for small $\alpha$, there exists $\tilde{s}_2>0$ such that
\ben\label{controloexp}
\lambda(s)^2\le \exp\left(-\frac{B}{(\edois,Q_d)^2(s)}\right), \forall \ s>\tilde{s}_2.
\een

\end{lema}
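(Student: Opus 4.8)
The plan is to obtain both parts as refinements of the scheme recalled in Section~2, following \cite{merleraphael1} and its adaptation to the damped equation in \cite{darwich}; the novelty lies in the bookkeeping of the terms created by the damping. As a preliminary I would observe that, the blow-up interval $[0,T)$ being finite, \eqref{controlomassa} and the mass, energy and momentum evolution laws keep $\|u(t)\|_2^2-\|Q\|_2^2$ and $M(u(t))$ under control and keep the term $\lambda^2|E_0|$ (with $E_0=E(u_0)<0$) effective on $[0,T)$; every further term produced by the damping is then $O(\|a\|_{W^{1,\infty}})$ times a controlled quantity, hence absorbed once $\delta$ is small.

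For part~1, set $G(s)=\left(1+\frac{(\eum,W_d)}{\|yQ\|_2^2}\right)(\edois,Q_d)$ and differentiate $G$ in $s$ using the modulation system \eqref{eqe}--\eqref{eqe2}, now carrying the damping contributions. To compute $\frac{d}{ds}(\edois,Q_d)$ one pairs \eqref{eqe2} with $Q_d$: since $L_+Q_d=-2Q$, $(Q,Q_d)=0$ and, by the orthogonality conditions, $(\eum,Q_d)=0$, the linear part contributes $2(\eum,Q)$ while the remainder is quadratic in $\epsilon$ (terms of the form $\frac{\lambda_s}{\lambda}\,O(\|\epsilon\|)$, $\frac{x_s}{\lambda}\,O(\|\epsilon\|)$, $\tilde\theta_s\,O(\|\epsilon\|)$, $O(\|\epsilon\|^2)$) plus the damping remainder. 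Likewise, pairing \eqref{eqe} with $W_d$ and using the Weinstein identities for $L_-$ together with the choice of $\nu$ (for which $(W,Q_{dd})=0$), the leading part of $\frac{d}{ds}(\eum,W_d)$ is a multiple of $(\edois,Q_d)$. Collecting everything and comparing with \eqref{virialtransformado}, the purpose of the correction $i\frac{(\edois,Q_d)}{\|yQ\|_2^2}W$ added to $\epsilon$ becomes visible: in the resulting identity the coercive part $\delta_0\|\epsilon\|^2$ of \eqref{virialtransformado} recombines with its defect $-\frac{1}{\delta_0}(\edois,Q_d)^2$ so as to produce the coercive quantity $\tilde\delta_0\|\tilde\epsilon\|^2$, the only surviving loss being a term of size $(\edois,Q_d)^4$ coming from inserting the $O((\edois,Q_d))$-sized correction quadratically into the nonlinear remainders $R_1(\epsilon),R_2(\epsilon)$; the term $\lambda^2|E_0|$ persists by the preliminary observation. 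This gives \eqref{refinado}.

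For part~2, one starts from \eqref{refinado}. Using that $(\edois,Q_d)(s)>0$ for $s>s_0$ (preceding lemma) and that $(\edois,Q_d)$ stays small, the residual quartic term is absorbed into the coercive part of \eqref{refinado} together with $\lambda^2|E_0|$ — here the sharp energy identity for the refined quadratic form is used, as in \cite{merleraphael1} — so that, for $s$ large, $G_s\ge\frac{\tilde\delta_0}{2}\|\tilde\epsilon\|^2+\frac12\lambda^2|E_0|>0$. Hence $G$ is eventually increasing and, being bounded (by $C\delta(\alpha)$), converges; in particular $\int^{\infty}\lambda^2\,ds<\infty$ and $\int^{\infty}\|\tilde\epsilon\|^2\,ds<\infty$. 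On the other hand, since $\lambda\to0$ one has $\int_{s_0}^s(\edois,Q_d)\,d\tau\to\infty$, so the constant error in \eqref{desigualdadeintegral} becomes negligible over long intervals and $-\log\lambda(s)$ is comparable to $\int_{s_0}^s(\edois,Q_d)\,d\tau$. Feeding this back into the differential inequality for $G$, together with the modulation equations relating $\frac{\lambda_s}{\lambda}$ to $(\edois,Q_d)$, and running an ODE comparison exactly as in \cite{merleraphael1} and \cite{darwich}, one obtains \eqref{controloexp}.

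I expect the main obstacle to be part~1: one must carry out the differentiation of $G$ retaining every term produced by the modulation equations \emph{and} by the damping, and verify the precise cancellations that turn \eqref{virialtransformado} into \eqref{refinado} with only a quartic loss — this is where the algebraic identities for $L_\pm$, $Q_d$, $Q_{dd}$, $W$, the choice of $\nu$ and the orthogonality conditions are used simultaneously, and where one must check that the damping terms, carrying the factor $\|a\|_{W^{1,\infty}}<\delta$, do not spoil the sign of the leading coercive contributions. A secondary difficulty, absent in the undamped case, is keeping the energy $E(u(t))$ under control on the whole blow-up interval so that $\lambda^2|E_0|$ remains effective. The remaining steps of part~2 — absorbing the quartic term, the passage from \eqref{desigualdadeintegral} to a comparison between $-\log\lambda$ and $\int(\edois,Q_d)\,d\tau$, and the ODE comparison — are then essentially routine.
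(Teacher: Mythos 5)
You have proved (or sketched) the right kind of statement, but note first that this lemma lives in Section~2, where $a\equiv 0$: it is the undamped Merle--Rapha\"el result, which the paper itself only recalls from \cite{merleraphael1} without proof, and there mass, energy and momentum are exactly conserved, so $E_0=E(u_0)<0$ is a fixed number and the term $\lambda^2|E_0|$ needs no ``bookkeeping''. Your preliminary paragraph about damping contributions therefore addresses a different equation; moreover, for the damped equation the persistence of the geometric decomposition and of the energy control on the whole interval of existence is precisely what Section~3 has to establish through the bootstrap set $X$ (conditions (H1)--(H5) and Lemma~5), so it cannot be invoked as a preliminary observation. For part~1 your skeleton (differentiate $\bigl(1+(\epsilon_1,W_d)/\|yQ\|_2^2\bigr)(\epsilon_2,Q_d)$ along \eqref{eqe}--\eqref{eqe2}, use $L_+Q_d=-2Q$, $L_-(y^2Q)=-4Q_d$, the orthogonality conditions and the choice of $\nu$) is indeed the Merle--Rapha\"el computation, but the crucial cancellation producing $\tilde{\delta}_0\|\tilde{\epsilon}\|^2$ with only a quartic loss is asserted rather than verified, as you yourself flag.

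The genuine gap is in part~2. You propose to absorb $C(\epsilon_2,Q_d)^4$ into $\tilde{\delta}_0\|\tilde{\epsilon}\|^2$ so as to conclude that $G$ is eventually increasing; this step fails, because $\|\tilde{\epsilon}\|$ gives no control whatsoever on $(\epsilon_2,Q_d)$: the correction $i\frac{(\epsilon_2,Q_d)}{\|yQ\|_2^2}W$ was designed to remove exactly that component, since $(W,Q_d)=(y^2Q,Q_d)=-\|yQ\|_2^2$ gives $(\tilde{\epsilon}_2,Q_d)=0$. It is also unnecessary. The actual derivation of \eqref{controloexp} --- the one the paper reproduces for its damped analogue in Proposition~11 --- simply drops the nonnegative right-hand side of \eqref{refinado} to obtain $f_s+Cf^4\ge 0$ for $f=\bigl(1+(\epsilon_1,W_d)/\|yQ\|_2^2\bigr)(\epsilon_2,Q_d)\sim(\epsilon_2,Q_d)$, integrates this Riccati-type inequality to get $(\epsilon_2,Q_d)(s)\gtrsim\bigl(C's+f(0)^{-3}\bigr)^{-1/3}$, deduces $\int_0^s(\epsilon_2,Q_d)\,d\tau\gtrsim (\epsilon_2,Q_d)(s)^{-2}-C$, and then bounds this integral by $-\log\lambda(s)$ through \eqref{desigualdadeintegral}; taking $s$ large ($s>\tilde{s}_2$) absorbs the constants and yields the exponential bound. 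The intermediate facts you actually assert ($G_s>0$ eventually, convergence of $G$, $\int^\infty\lambda^2\,ds<\infty$, comparability of $-\log\lambda$ with $\int(\epsilon_2,Q_d)$) neither follow from what you have established nor produce \eqref{controloexp}; the quantitative chain that does produce it is deferred wholesale to ``an ODE comparison exactly as in \cite{merleraphael1} and \cite{darwich}'', which is the entire content of part~2.
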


Finally, defining $t_k$ such that $\lambda(t_k)=2^{-k}$, it follows from the inequalities (\ref{desigualdadeintegral}), (\ref{quasemonotonia}) and (\ref{controloexp}) that $t_{k+1}-t_k\le C\lambda^2(t_k)|\log\lambda(t_k)|^{1/2}$, for large $k$. By summing in $k$, we deduce the finiteness of $T_{max}$ and the blowup is proved. The log upper bound is then a simple consequence of the above considerations.

\end{section}

\begin{section}{Proof of the main theorem}

$\indent$ The technique presented in the previous section works as long as it is possible to obtain the geometric decomposition for the solution of the equation one is working with. Unlike the (NLS) setting, since the mass and the energy are no longer conserved, one cannot guarantee \textit{a priori} that the solution of (NLS$_a$) is decomposable, even if the initial data satisfies the same conditions as before. Therefore, we shall work over certain uniformly bounded intervals contained in the maximal interval of existence of the solution of (NLS$_a$), where we know that it is possible to obtain the decomposition. The goal will be to prove that, by conveniently choosing the initial data and assuming $\|a\|_{W^{1,\infty}}$ small, then the largest of those intervals is actually the maximal interval of existence. Since those intervals are bounded uniformly, one has $T_a(u_0)<\infty$ and the blowup phenomenon is proved.

Let $u_0\in H^1(\real)$ with $E(u_0)<0$ and $M(u_0)=0$. Set $\alpha=2\left(\|u_0\|_2^2-\|Q\|_2^2\right)$, and assume that $\alpha>0$ is small. Therefore, on a small interval $[0,T_0]$, it is possible to decompose the solution geometrically. We denote $m=\lambda(0)$, parameter that has to be small for the following calculations. Futhermore, we suppose that $(\edois,Q_d)(0)>0$ and that
\ben
\lambda(0)^2\le e^{-\frac{B}{(\edois,Q_d)^2(0)}} \le \|\epsilon(0)\|^8.
\een
Notice that these conditions can be fulfilled: given $\tilde{u}_0\in H^1(\real)$ with negative energy and mass just above the critical mass $\|Q\|_2^2$, we consider the respective solution $\tilde{u}$ of (NLS). By the previous section, we know that the solution blows-up and that, for $t$ close to $T_{max}$, $(\edois, Q_d)(t)>0$ and $\lambda(t)^2\le \exp\left(-\frac{B}{(\edois,Q_d)^2(t)}\right)$ (cf. Lemmas 3,4). Now it is enough to consider $u_0=\tilde{u}(t)$, for a large fixed $t$.

In the following, we write $E(t):=E(u(t))$. Fixed $\alpha, m$ and $\|a\|_{W^{1,\infty}}$ small, we define the set $X$ as the set of all $T\ge 0$ such that\newline

(H1) $\quad T\le\frac{1}{2\|a\|_\infty}\log\frac{\|Q\|_2^2+\alpha}{\|Q\|_2^2+\frac{\alpha}{2}}$;\newline

(H2) $\quad E(t)\le\alpha\|u_x(t)\|^2_2,\ 0\le t\le T$;\newline

These two conditions and \eqref{controlomassa} allow us to obtain the geometric decomposition on the interval $[0,T]$. Now we define $k_0$ as the positive integer such that $\frac{1}{2^{k_0}}\ge\lambda(0)>\frac{1}{2^{k_0+1}}$ and $k_T$ as the integer such that $k_T\ge k_0$ and $\frac{1}{2^{k_T}}\ge\lambda(T)>\frac{1}{2^{k_T+1}}$.\newline

(H3) For each $k_T\ge k> k_0$, choose $t_k$ (taken in increasing order) such that
$\lambda(t_k)=2^{-k}$.
We also write $T=t_{k_T+1}$ and $0=t_{k_0}$. Then we require $t_{k+1}-t_k\le \lambda^{3/2}(t_k)$, $ k_T\ge k\ge k_0$;\newline

(H4) $\quad\lambda(\tilde{t})\le2\lambda(t)$, $\forall \tilde{t}, t:\ T\ge \tilde{t}\ge t\ge 0$;\newline

(H5) $\quad\lambda^{1/2}(t)\le \|\epsilon(t)\|^2$, $0\le t\le T$.\newline
%\bi
%\item  e $E(t)\le\alpha\|u_x(t)\|^2_2$ (these two conditions and \eqref{controlomassa} allows one to obtain the geometric decomposition on the interval $[0,T]$);
%\item $\frac{1}{2^{k_0}}\ge\lambda(0)>\frac{1}{2^{k_0+1}}$, with $k_0$ a positive integer;
%\item $\frac{1}{2^{k_T}}\ge\lambda(T)>\frac{1}{2^{k_T+1}}$, with $k_T\ge k_0$;
%%\item $T_a\le T$, where $T_a$ satisfies the satisfaz as hipóteses do lema anterior;
%\item  For each $k_T> k> k_0$, choose $t_k$ (taken in increasing order) such that
%\be
%\lambda(t_k)=\frac{1}{2^k}.
%\ee
%We also write $T=t_{k_T}$ and $0=t_{k_0}$. Then $t_{k+1}-t_k\le \lambda^{3/2}(t_k)$, $ k_T> k\ge k_0$;
%\item $\lambda(\tilde{t})\le2\lambda(t)$, $\forall \tilde{t}, t:\ T\ge \tilde{t}\ge t\ge 0$;
%\item $\lambda^{1/2}(t)\le \|\epsilon(t)\|^2$, $0\le t\le T$.
%\ei

It is important to notice that the hypothesis placed over the interval $[0,T]$ have a direct analogy with the lemmas from the previous section.

As a consequence of the broad inequalities and the continuity of the functions involved in the conditions (H1)-(H5), the set $X$ is closed in $[0,T_a(u_0))$. Since $0\in X$, $X$ is nonempty. If one proves that $X$ is open in $[0, T_a(u_0))$, then one obtains $X=[0,T_a(u_0))$. Since $X$ is bounded (by (H1)), this proves finite-time blowup. To show that $X$ is open in $[0, T_a(u_0))$, we shall prove that, if $T\in X$, then, on the interval $[0,T]$, one verifies stronger conditions than those that define the set $X$. By continuity, this implies that, for small $\delta>0$, $T+\delta\in X$, and $X$ is open.

\begin{nota}
For small $\|a\|_\infty$,
\be
 T< 2 \le\frac{1}{2\|a\|_\infty}\log\frac{\|Q\|_2^2+\alpha}{\|Q\|_2^2+\frac{\alpha}{2}}.
\ee
In fact, using the lenght hypothesis for the intervals $[t_k,t_{k+1}]$,
\be
T=\sum_{k_0}^{k_T+1} t_{k+1}-t_k \le \sum_{k_0}^{k_T+1}\left(\frac{1}{2\sqrt{2}}\right)^k< 2
\ee
\end{nota}

\begin{lema}
For any $\delta>0$, there exists $a_0>0$ such that, for $0<\|a\|_{W^{1,\infty}}<a_0$, one has, over the interval $[0,T]$,
\be
E(t)\lambda^{3/2}(t)<\delta.
\ee
\end{lema}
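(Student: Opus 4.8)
The plan is to establish, on all of $[0,T]$, the one-sided bound $E(t)\le C\|a\|_{W^{1,\infty}}\,s(t)$ with $C$ depending only on universal quantities, and then to multiply through by $\lambda^{3/2}(t)>0$; in this way the sign of $E(t)$ is irrelevant (when $E(t)\le 0$ the assertion is immediate, and when $E(t)>0$ the same upper bound is used). I would begin from the energy evolution law and estimate its three terms: the first by $\|a\|_\infty\|u_x(t)\|_2^2$, the third by $\|a_x\|_\infty\|u_x(t)\|_2\|u(t)\|_2$, and the middle one by $\|a\|_\infty\|u(t)\|_6^6\le C\|a\|_\infty\|u_x(t)\|_2^2\|u(t)\|_2^4$ through the one-dimensional Gagliardo--Nirenberg inequality. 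Since \eqref{controlomassa} together with (H1) and $\|u_0\|_2^2\le\|Q\|_2^2+\alpha$ keep $\|u(t)\|_2$ bounded by a universal constant (for $\alpha$ small), this gives $|E'(t)|\le C\|a\|_{W^{1,\infty}}(1+\|u_x(t)\|_2^2)$. The geometric decomposition provides $\|u_x(t)\|_2\le (1+\delta(\alpha))\|Q\|_2/\lambda(t)$, and (H4) applied with lower endpoint $0$ forces $\lambda(t)\le 2m<1$, so the additive constant can be absorbed and we arrive at $|E'(t)|\le C\|a\|_{W^{1,\infty}}\lambda(t)^{-2}$.

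The second step is to pass to the rescaled time $s$, for which $dt=\lambda^2\,ds$, so that $|dE/ds|\le C\|a\|_{W^{1,\infty}}$ uniformly on the corresponding $s$-interval. Integrating from $s=0$ and using $E(0)=E(u_0)<0$ yields $E(t)\le C\|a\|_{W^{1,\infty}}\,s(t)$. It then remains to bound $s(t)=\int_0^t\lambda(\tau)^{-2}\,d\tau$: I would split the integral over the blocks $[t_j,t_{j+1}]$ coming from (H3); on such a block, (H4) gives $\lambda(\tau)\ge\frac{1}{2}\lambda(t_{j+1})=2^{-j-2}$ while (H3) gives $t_{j+1}-t_j\le\lambda(t_j)^{3/2}=2^{-3j/2}$, so the block contributes at most $16\cdot 2^{j/2}$; summing this geometric series from $j=k_0$ up to the block $[t_k,t_{k+1}]$ containing $t$, and using once more (H4) to get $\lambda(t)\ge 2^{-k-2}$ there, produces $s(t)\le C\lambda(t)^{-1/2}$ with a universal $C$.

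Putting the pieces together, $E(t)\lambda^{3/2}(t)\le C\|a\|_{W^{1,\infty}}\,s(t)\,\lambda^{3/2}(t)\le C\|a\|_{W^{1,\infty}}\,\lambda(t)\le C\,m\,\|a\|_{W^{1,\infty}}$ for a universal $C$; since $\alpha$ and $m$ have already been fixed, it suffices to choose $a_0$ smaller than $\delta/(Cm)$ as well as than the thresholds used above (those needed for the geometric decomposition to exist, for the Gagliardo--Nirenberg/mass step, and for $\lambda(t)\le 2m<1$). I expect the only genuinely delicate point to be the estimate $s(t)\le C\lambda(t)^{-1/2}$, where (H3) and (H4) must be combined to control $\lambda$ from below on each dyadic block and to sum the resulting series with a constant independent of $m,\alpha,\|a\|$; the remaining steps are a routine chain of inequalities once the differential inequality in the variable $s$ has been set up.
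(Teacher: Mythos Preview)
Your argument is correct and is in fact cleaner than the paper's. Both proofs rely on the same structural inputs---the energy evolution law together with the dyadic block decomposition (H3) and the quasi-monotonicity (H4)---but the organization differs markedly. The paper proceeds by an induction on the dyadic blocks, proving $E(t_j)\lambda^{3/2}(t_j)<\delta(1-1/j)$; on each block it applies an integrating factor $e^{\|a\|_\infty(p+1)t}$ to the differential inequality $E'(t)\le -\|a\|_\infty(p+1)E(t)+\|a\|_\infty\frac{p+3}{2}\|u_x\|_2^2+\|a_x\|_\infty\|u_x\|_2\|u\|_2$, integrates, and then checks that the contribution on $[t_{i-1},t]$ is at most $\delta(1/i-1/(i+1))$. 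Your route sidesteps all of this by passing to the rescaled time $s$, in which the differential inequality becomes $|dE/ds|\le C\|a\|_{W^{1,\infty}}$ and hence integrates trivially to $E(t)\le C\|a\|_{W^{1,\infty}}\,s(t)$ (using $E(0)<0$). The remaining work---the estimate $s(t)\le C\lambda(t)^{-1/2}$---is exactly the geometric sum over blocks that the paper also performs, only you do it once at the end rather than threading it through an induction. Your approach also yields the sharper explicit bound $E(t)\lambda^{3/2}(t)\le Cm\,\|a\|_{W^{1,\infty}}$, making the dependence on $\|a\|_{W^{1,\infty}}$ transparent. The paper's version, with its careful $\delta(1-1/j)$ margins, does not appear to buy anything extra for the downstream arguments.
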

\begin{proof}
We shall prove that, for each $i\ge k_0$, if $E(t_j)\lambda^{3/2}(t_j)<\delta\left(1-\frac{1}{j}\right),\ \forall j\le i$, then
\be
E(t)\lambda^{3/2}(t) < \delta\left(1-\frac{1}{i+1}\right), \ \forall \ t\in[t_i,t_{i+1}].
\ee
The result then follows by induction.
Suppose that
\be
E(t_j)\lambda^{3/2}(t_j)<\delta\left(1-\frac{1}{j}\right), \forall j\le i
\ee
Recall that, over the interval $[t_i,t_{i+1}]$, $\lambda(t)>2^{-i+2}$ and, by the geometric decomposition, $\lambda(t)$ is approximately $\|Q\|_2\|u_x(t)\|_2^{-1}$.
Using the energy evolution law,
$$
\frac{dE(t)}{dt}\le -\|a\|_\infty(p+1)E(t) + \|a\|_\infty\frac{p+3}{2}\| u_x(t)\|_2^2 + \|a_x\|_\infty\|u_x(t)\|_2e^{\|a\|_\infty t}\|u_0\|_2
$$
and so
\begin{align}\label{diferencial}
\frac{d}{dt}\left(e^{\|a\|_\infty(p+1)t}E(t)\right) &\le \|a\|_\infty\frac{p+3}{2}e^{\|a\|_\infty(p+1)t}\| u_x(t)\|_2^2 \nonumber\\&\quad+ \| a_x\|_\infty\|u_x(t)\|_2e^{\|a\|_\infty(p+2) t}\|u_0\|_2.
\end{align}
If $i\neq k_0+1$, integrating over the interval $[t_{i-1},t]$ with $t_i<t\le t_{i+1}$,
\begin{align*}
e^{\|a\|_\infty(p+1)t}E(t) - e^{\|a\|_\infty(p+1)t_{i-1}}E(t_{i-1})&\le \frac{p+3}{2(p+1)}2^{2(i+1)}\left(e^{\|a\|_\infty(p+1)t}-e^{\|a\|_\infty(p+1)t_{i-1}}\right) \\
&+ \frac{\| a_x\|_\infty}{\|a\|_\infty(p+2)}2^{i+1}\left(e^{\|a\|_\infty(p+2)t}-e^{\|a\|_\infty(p+2)t_{i-1}}\right),
\end{align*}
which implies
\begin{align*}
E(t)&\le E(t_{i-1})e^{-\|a\|_\infty(p+1)(t-t_{i-1})} +  \frac{p+3}{2(p+1)}2^{2(i+1)}\left(1-e^{-\|a\|_\infty(p+1)(t-t_{i-1})}\right)\\
&\quad+ \frac{\| a_x\|_\infty}{\|a\|_\infty(p+2)}2^{i+1}\left(e^{\|a\|_\infty t} - e^{-\|a\|_\infty(p+1)(t-t_{i-1})}e^{\|a\|_\infty t_{i-1}}\right).
\end{align*}
Multiplying by $\lambda^{3/2}(t)<\lambda^{3/2}(t_{i-1})=2^{-\frac{3}{2}(i-1)}$ and using the induction hypothesis,
\begin{align*}
E(t)\lambda^{3/2}(t)&\le \delta\left(1-\frac{1}{i-1}\right) +  C2^{i/2}\left(1-e^{-\|a\|_\infty(p+1)(t-t_{i-1})}\right) \\&\quad+ \frac{\| a_x\|_\infty}{\|a\|_\infty(p+2)}2^{-i/2}\left(e^{\|a\|_\infty t} - e^{-\|a\|_\infty(p+1)(t-t_{i-1})}e^{\|a\|_\infty t_{i-1}}\right).
\end{align*}
From the interval lenght hypothesis, one has $t_{i+1}-t_{i-1}\le  2^{-i+2}$. Since
\begin{align*}
&\frac{1}{\|a\|_\infty}2^{-i/2}\left(e^{\|a\|_\infty t} - e^{-\|a\|_\infty(p+1)(t-t_i)}e^{\|a\|_\infty t_i}\right)\\\le\ & e^{\|a\|_\infty T}2^{-3i/2}\left(\frac{e^{\|a\|_\infty 2^{-i+2}} - e^{-\|a\|_\infty(p+1)2^{-i}+2}}{\|a\|_\infty 2^{-i}}\right) \le K2^{-3i/2},
\end{align*}
with $K$ independent of $i$ and $\|a\|_\infty$, we deduce
\begin{align*}
E(t)\lambda^{3/2}(t)\le \delta\left(1-\frac{1}{i}\right) +  C2^{i/2}\left(1-e^{-\|a\|_\infty(p+1)2^{-i+2}}\right) + \frac{\| a_x\|_\infty}{(p+2)}K2^{-3i/2}.
\end{align*}
It now suffices to check that, independently of $i$, for small $\|a\|_{W^{1,\infty}}$, 
$$
C2^{i/2}\left(1-e^{-\|a\|_\infty(p+1)2^{-i}}\right) + \frac{\| a_x\|_\infty}{(p+2)}K2^{-3i/2}\le \delta\left(\frac{1}{i}-\frac{1}{i+1}\right).
$$
For the case $i=k_0+1$, we integrate (\ref{diferencial}) over the interval $[t_{k_0}, t]$, with $t_{k_0}<t\le t_{k_0+1}$ and we use the fact that $E(t_{k_0})<0$.
\end{proof}

The following lemma solves the problem of the non-conservation of the linear momentum:
\begin{lema}
For small $\|a\|_\infty$ and $\alpha$, one has $|(\edois,Q_y)(t)|\le 2\delta(\alpha)\|\epsilon(t)\|$, $\forall t\in [0,T]$.
\end{lema}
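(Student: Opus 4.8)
The plan is to use the representation \eqref{momento} of the linear momentum in terms of $\epsilon$ together with the momentum evolution law, estimating how far $M(u(t))$ can drift from its initial value $M(u_0)=0$ over the bounded interval $[0,T]$. Recall from \eqref{momento} that
$$
M(u(t)) = \frac{1}{\lambda}\left[\parteim\left(\int_\real \epsilon_y(t)\bar{\epsilon}(t)\,dx\right) - 2(\edois,Q_y)(t)\right],
$$
so that
$$
2(\edois,Q_y)(t) = \parteim\left(\int_\real \epsilon_y(t)\bar{\epsilon}(t)\,dx\right) - \lambda(t) M(u(t)).
$$
The first term on the right is bounded by $\delta(\alpha)\|\epsilon(t)\|$ exactly as in the $a\equiv 0$ case (it is quadratic in $\epsilon$ and small by the closeness of $\epsilon$ to $0$ in the decomposition norm). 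So everything reduces to showing that $\lambda(t)|M(u(t))|$ is a small multiple of $\|\epsilon(t)\|$, uniformly on $[0,T]$, once $\|a\|_\infty$ is small.

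To control $M(u(t))$ itself, I would integrate the linear momentum evolution law,
$$
M(u(t)) = -2\int_0^t \int_\real a(x)\,\parteim\nabla u(\tau,x)\,\ub(\tau,x)\,dx\,d\tau,
$$
and bound the integrand by $\|a\|_\infty \|u_x(\tau)\|_2 \|u(\tau)\|_2$. Using the charge control \eqref{controlomassa} to bound $\|u(\tau)\|_2$ by a fixed constant (since $T\le 2$ by Remark 4), and using the geometric decomposition to write $\|u_x(\tau)\|_2 \simeq \|Q\|_2/\lambda(\tau)$, this gives
$$
|M(u(t))| \le C\|a\|_\infty \int_0^t \frac{d\tau}{\lambda(\tau)}.
$$
The integral $\int_0^t \lambda(\tau)^{-1}\,d\tau$ must be shown to be at most $C\lambda(t)^{-1}$ up to harmless factors. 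This follows from the interval-length hypothesis (H3) and the quasi-monotonicity (H4): splitting $[0,t]$ across the $t_k$'s, on each piece $[t_k,t_{k+1}]$ one has $\lambda\ge 2^{-k-1}$ and $t_{k+1}-t_k\le \lambda(t_k)^{3/2}=2^{-3k/2}$, so the contribution is $\lesssim 2^{-k/2}$; summing the geometric series from the bottom up to the index of $t$, one gets $\int_0^t \lambda(\tau)^{-1}\,d\tau \le C\,2^{k_t/2} \le C\lambda(t)^{-1/2}$. Hence $\lambda(t)|M(u(t))| \le C\|a\|_\infty \lambda(t)^{1/2}$, and by (H5), $\lambda(t)^{1/2}\le \|\epsilon(t)\|^2 \le C\|\epsilon(t)\|$ (as $\|\epsilon\|$ is bounded by $\delta(\alpha)$, hence $\le 1$). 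Therefore $\lambda(t)|M(u(t))| \le C\|a\|_\infty \|\epsilon(t)\|$, which is $\le \delta(\alpha)\|\epsilon(t)\|$ once $\|a\|_\infty$ is taken small enough relative to $\alpha$. Combining, $|(\edois,Q_y)(t)| \le \tfrac{1}{2}(\delta(\alpha)+\delta(\alpha))\|\epsilon(t)\| = \delta(\alpha)\|\epsilon(t)\|$... to land the stated constant $2\delta(\alpha)$ it suffices to be generous in the estimates.

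The main obstacle I anticipate is the bound $\int_0^t \lambda(\tau)^{-1}\,d\tau \le C\lambda(t)^{-1/2}$: it relies essentially on the superlinear decay rate in (H3) (length $\lesssim \lambda^{3/2}$ rather than $\lesssim \lambda^2$), and one must be careful that the constant coming from the geometric series and from the approximation $\|u_x\|_2 \simeq \|Q\|_2/\lambda$ in the decomposition is genuinely independent of $i$, $T$ and $\|a\|_\infty$. A secondary point is circularity: (H3)–(H5) are part of the definition of $X$, so this lemma is only claimed for $T\in X$, which is exactly the regime in which these bounds are available — so there is no vicious circle, but the logic should be stated explicitly.
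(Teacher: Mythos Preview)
Your proposal is correct and follows essentially the same route as the paper: integrate the momentum evolution law, split the resulting time integral over the dyadic subintervals $[t_k,t_{k+1}]$ using (H3)--(H4), sum the geometric series in $2^{-k/2}$, and then invoke (H5) to convert the $\lambda$-bound into an $\|\epsilon\|$-bound before applying \eqref{momento}. The paper in fact records the slightly sharper intermediate fact that $|M(u(t))|\le C\|a\|_\infty$ uniformly (since $\sum_k 2^{-k/2}$ converges outright, not merely $\lesssim 2^{k_t/2}$), but your weaker estimate $\lambda(t)|M(u(t))|\le C\|a\|_\infty\lambda(t)^{1/2}$ is sufficient and the logic is the same.
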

\begin{proof}
For each $k_0\le k\le k_T$ and $t\in[t_k,t_{k+1}]$,
\begin{align*}
&\left|\int_\real \parteim u_x(t)\ub(t) dx - \int_\real \parteim u_x(t_k)\ub(t_k) dx\right|=2\left|\int_{t_k}^{t}\int_\real a \parteim u_x(s,x)\ub(s,x)dx ds\right|  \\
\le&\ 2\|a\|_\infty\|u_0\|_2e^{\|a\|_\infty T}\int_{t_k}^{t} \|\nabla u(s)\|_2ds \le
 C\|a\|_\infty\|u_0\|_2  \int_{t_k}^{t_{k+1}} \frac{1}{\lambda(s)} ds\\\le&\ C\|a\|_\infty\|u_0\|_2 \frac{2}{\lambda(t_{k+1})} (t_{k+1}-t_k)\\
\le&\ C'\|a\|_\infty\|u_0\|_2\lambda^{1/2}(t_k).
\end{align*}
Given $t\in[0,T]$, let $k_t$ be such that $t\in [t_{k_t}, t_{k_t+1}]$. Then
\begin{align*}
\left|\int_\real \parteim u_x(t)\ub(t) dx\right| &\le \left|\int_\real \parteim u_x(t)\ub(t) dx - \int_\real \parteim u_x(t_k)\ub(t_k) dx\right|\\&\quad+ \sum_{i=0}^{k_t-1}\left|  \int_\real \parteim u_x(t_{i+1})\ub(t_{i+1}) dx - \int_\real \parteim u_x(t_i)\ub(t_i) \right|\\&\le C'\|a\|_\infty\|u_0\|_2\sum_{i=0}^{k_t}\lambda^{1/2}(t_i)\le C'\|a\|_\infty\|u_0\|_2\sum_{i=0}^\infty \left(\frac{1}{\sqrt{2}}\right)^i.
\end{align*}
Recalling the last property of the interval $[0,T]$, (H5), we obtain
$$
\left|\lambda(t)\int_\real \parteim u_x(t)\ub(t) dx\right|\le \|\epsilon(t)\|^4\left(C'\|a\|_\infty\|u_0\|_2\sum_{i=0}^\infty \left(\frac{1}{\sqrt{2}}\right)^i\right)\le \delta(\alpha)\|\epsilon(t)\|,
$$
for small $\|a\|_\infty$ and $\alpha$.
Using \eqref{momento}, we deduce finally
\begin{align*}
|(\edois,Q_y)(t)|&\le \left|\parteim\left(\int_\real \epsilon_y(t)\bar{\epsilon}(t)dx\right)\right| + \left|\lambda(t)M(u(t))\right|\le \|\epsilon(t)\|_2\|\epsilon_y(t)\|_2+\delta(\alpha)\|\epsilon(t)\| \\&\le 2\delta(\alpha)\|\epsilon(t)\|.
\end{align*}

\end{proof}
Let us introduce a new time variable
\be
s(t)=\int_0^t \frac{1}{\lambda^2(\tau)}d\tau
\ee
and define $S=s(T)$ and $s_k=s(t_k)$, $ k_0\le k \le k_T$. Then, from the expression of $\epsilon$, \eqref{epsilon}, we may write (NLS$_a$) in terms of $\epsilon=\epsilon_1 + i\epsilon_2$ over the interval $[0,S]$:

\begin{align}\label{aeqeum}
\partial_s\epsilon_1 - L_-\epsilon_2 &= \frac{\lambda_s}{\lambda}Q_d + \frac{x_s}{\lambda}Q_y + \frac{\lambda_s}{\lambda}(\epsilon_1)_d + \frac{x_2}{\lambda}(\epsilon_1)_y + \tilde{\theta_s}\epsilon_2 - R_2(\epsilon) -a\lambda^2\epsilon_1
\\
\label{aeqedois}
\partial_s\epsilon_2 + L_+\epsilon_1 &= -\tilde{\theta_s}Q - \tilde{\theta_s}\epsilon_1 + \frac{\lambda_s}{\lambda}(\epsilon_2)_d + \frac{x_s}{\lambda}(\epsilon_2)_y + R_1(\epsilon)-a\lambda^2\epsilon_2.
\end{align}

Through the control of $|(\edois,Q_y)|$ given by the previous lemma and the same ortogonality conditions as the last section, one has the following (see \cite{merleraphael1}, proposition 1):

\begin{lema}\label{asem(e1,Q)}
There exists an universal constant $\delta_0>0$ such that, for $~\alpha$ and $\|a\|_{W^{1,\infty}}$ small,
\begin{align*}
\left[\left(1+\frac{1}{4\delta_0}(\epsilon_1,Q)\right)(\epsilon_2,Q_d)\right]_s\ge&\ \delta_0\|\epsilon\|^2 - 2 \lambda^2E - \frac{1}{\delta_0}(\epsilon_2,Q_d)^2 \\&- \|a\|_\infty\lambda^2|(\epsilon_2,Q_d)\left((\epsilon_2,Q_d)+2(\epsilon_1,Q)\right)|
\end{align*}
\end{lema}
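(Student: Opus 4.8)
The plan is to rerun the proof of Proposition~1 of \cite{merleraphael1} --- which yields the undamped estimate \eqref{virialtransformado} --- word for word, but feeding in the modulation system \eqref{aeqeum}--\eqref{aeqedois} and the mass and energy evolution laws in place of \eqref{eqe}--\eqref{eqe2} and the conservation laws. The structural ingredients of that proof are untouched by the damping: the identities $L_+Q_d=-2Q$, $L_-Q=0$; the orthogonality conditions $(\epsilon_1,Q_d)=(\epsilon_2,Q_{dd})=(\epsilon_1,yQ)=0$; the coercivity of $L$ under those conditions, with the $(\edois,Q_y)$-direction absorbed thanks to the bound $|(\edois,Q_y)|\le 2\delta(\alpha)\|\epsilon\|$ of the preceding lemma; and the role of the correction factor $1+\tfrac1{4\delta_0}(\epsilon_1,Q)$, tailored to absorb the $(\epsilon_1,Q)$-direction of $L_+$. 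So I only need to locate and estimate the terms the damping introduces.

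I would set $F(s)=\bigl(1+\tfrac1{4\delta_0}(\epsilon_1,Q)\bigr)(\epsilon_2,Q_d)$ and expand $F_s$ using \eqref{aeqeum}--\eqref{aeqedois}. Two things change relative to \cite{merleraphael1}. First, the damping terms $-a\lambda^2\epsilon_1$, $-a\lambda^2\epsilon_2$ add the quantities $-\lambda^2(a\epsilon_1,Q)$ to $(\epsilon_1,Q)_s=(\partial_s\epsilon_1,Q)$ and $-\lambda^2(a\epsilon_2,Q_d)$ to $(\epsilon_2,Q_d)_s=(\partial_s\epsilon_2,Q_d)$; they also perturb the modulation parameters $\lambda_s/\lambda,\ x_s/\lambda,\ \tilde\theta_s$ obtained by differentiating the orthogonality relations and inverting a system that is a small perturbation of a fixed invertible one --- the new inhomogeneities being pairings of $\lambda^2 a\,\epsilon$ and $\lambda^2 aQ$ against exponentially localised functions, hence of size $O(\|a\|_{W^{1,\infty}}\lambda^2)$. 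Second, since the energy is no longer conserved, the substitution of the identity $2\lambda^2E(t)=(L_+\epsilon_1,\epsilon_1)+(L_-\epsilon_2,\epsilon_2)+2(\epsilon_1,Q)-\|\epsilon\|_2^2+O(\|\epsilon\|^3)$ --- which follows only from \eqref{epsilon} and $-Q_{xx}+Q=Q^5$, so is unchanged --- now produces the term $-2\lambda^2E$ where \cite{merleraphael1} had $+2\lambda^2|E_0|$. Everything else is as in \cite{merleraphael1} and contributes $\delta_0\|\epsilon\|^2-\tfrac1{\delta_0}(\epsilon_2,Q_d)^2$, so that $F_s\ge\delta_0\|\epsilon\|^2-2\lambda^2E-\tfrac1{\delta_0}(\epsilon_2,Q_d)^2+\mathcal D_s$, where $\mathcal D_s$ collects all the damping-generated terms.

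It then remains to check $\mathcal D_s\ge -\tfrac{\delta_0}{2}\|\epsilon\|^2-\|a\|_\infty\lambda^2\bigl|(\epsilon_2,Q_d)\bigl((\epsilon_2,Q_d)+2(\epsilon_1,Q)\bigr)\bigr|$. Each term of $\mathcal D_s$ carries a factor $\|a\|_{W^{1,\infty}}\lambda^2$ times a pairing of $\epsilon$ (or $Q$) against an exponentially localised function. Splitting $a=a_0+(a-a_0)$, with $a_0$ the value of $a$ at the spatial point corresponding to $y=0$, and using $|a(\cdot)-a_0|\le\|a_x\|_\infty\lambda|y|$ together with the exponential decay of $Q$ and $Q_d$, the contribution of the oscillation $a-a_0$ and of the modulation shift is $O\bigl(\|a\|_{W^{1,\infty}}(\lambda^3\|\epsilon\|+\lambda^2\|\epsilon\|^2+\lambda^4)\bigr)$; hypothesis (H5), which gives $\lambda\le\|\epsilon\|^4$, converts this into $O(\|a\|_{W^{1,\infty}}\|\epsilon\|^{13})$, bounded by $\tfrac{\delta_0}{4}\|\epsilon\|^2$ once $\alpha$ is small. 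Keeping the remaining ``constant'' part un-split as $-\lambda^2a_0(\epsilon_2,Q_d)\bigl(1+\tfrac1{4\delta_0}(\epsilon_1,Q)\bigr)-\tfrac{\lambda^2a_0}{4\delta_0}(\epsilon_1,Q)(\epsilon_2,Q_d)$ (plus the analogous $aQ$-contribution through $(\epsilon_1,Q)_s$), bounding $|a_0|\le\|a\|_\infty$, and using once more that any residual pure power $\lambda^k$ with $k\ge2$ is $\le\|\epsilon\|^2$ by (H5), one is left with $-\|a\|_\infty\lambda^2\bigl|(\epsilon_2,Q_d)\bigl((\epsilon_2,Q_d)+2(\epsilon_1,Q)\bigr)\bigr|$ plus $\tfrac{\delta_0}{4}\|\epsilon\|^2$. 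Absorbing $\tfrac{\delta_0}{2}\|\epsilon\|^2$ into $\delta_0\|\epsilon\|^2$ yields the statement.

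The main obstacle, I expect, is the modulation step: one must redo the derivation of the modulation equations of \cite{merleraphael1} with the source $-a\lambda^2\epsilon$ present and verify that the induced shift of $\lambda_s/\lambda,\ x_s/\lambda,\ \tilde\theta_s$ is genuinely $O(\|a\|_{W^{1,\infty}}\lambda^2)$ --- it is this smallness, via (H5), that makes every damping-generated term except the displayed one absorbable by $\delta_0\|\epsilon\|^2$ --- together with the elementary but fiddly constant-chasing needed to recombine the surviving $-\lambda^2(a\epsilon_j,\cdot)$-type contributions into exactly the factored expression stated. The remainder of the argument merely transcribes \cite{merleraphael1}.
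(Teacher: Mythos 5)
Your plan follows the same route as the paper: the paper's own argument for this lemma is precisely a rerun of Proposition 1 of Merle--Rapha\"el fed with the damped modulation system \eqref{aeqeum}--\eqref{aeqedois} --- compute $(\epsilon_2,Q_d)_s$, use the orthogonality conditions and the energy written in terms of $\epsilon$ (which produces $-2\lambda^2E$ since the energy is no longer conserved), control $|x_s/\lambda|\le C\delta(\alpha)\|\epsilon\|$ and $(\epsilon_2,Q_y)$ via the momentum lemma, invoke coercivity of the bilinear form on the orthogonal subspace, and add the correction $\frac{1}{4\delta_0}(\epsilon_1,Q)(\epsilon_2,Q_d)$ to control $(\epsilon_1,Q)^2$; the damping enters only through $-\lambda^2(a\epsilon_j,\cdot)$ pairings and $O(\|a\|_{W^{1,\infty}}\lambda^2)$ shifts of the modulation parameters. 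On all of this you and the paper agree.

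Where your write-up goes astray is the final bookkeeping. The surviving damping contributions to $\bigl[(1+\frac{1}{4\delta_0}(\epsilon_1,Q))(\epsilon_2,Q_d)\bigr]_s$ are of the type $-\lambda^2(a\epsilon_2,Q_d)\bigl(1+\frac{1}{4\delta_0}(\epsilon_1,Q)\bigr)-\frac{\lambda^2}{4\delta_0}(a\epsilon_1,Q)(\epsilon_2,Q_d)$, which are \emph{linear} in $(\epsilon_2,Q_d)$ at leading order; they do not ``recombine into exactly'' the stated quadratic quantity $\|a\|_\infty\lambda^2|(\epsilon_2,Q_d)((\epsilon_2,Q_d)+2(\epsilon_1,Q))|$, nor are they dominated by it, since that term is quadratically small in $\|\epsilon\|$. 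This does not sink your argument, but only because you invoke (H5): with $\lambda\le\|\epsilon\|^4$ and $|(\epsilon_1,Q)|,|(\epsilon_2,Q_d)|\le C\|\epsilon\|$, every damping term --- the frozen part $a_0$ included, not just the oscillation $a-a_0$ --- is $O(\|a\|_{W^{1,\infty}}\|\epsilon\|^{9})$ and can be absorbed into $\frac{\delta_0}{2}\|\epsilon\|^2$, so you in fact prove a stronger inequality with no damping term at all, from which the stated one follows because its damping term is nonpositive (watch the relabelling of $\delta_0$, which also sits inside the prefactor $\frac{1}{4\delta_0}$). The cost is that your version of the lemma is (H5)-dependent --- legitimate here, since it is only applied for $T\in X$, but different from the paper, which keeps the damping term in the Darwich-style factored form, does not use (H5) inside this lemma, and performs the absorption only afterwards, in Remark \ref{nota}, using (H4) and the smallness of $\lambda$. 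If you state the (H5) dependence explicitly and correct the claim about the exact form of the leftover term, your proof goes through along essentially the paper's lines.
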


To prove such a result, several steps are needed: first, one calculates $(\edois, Q_d)_s$, use the first two ortogonality conditions and the energy expression in terms of $\epsilon$ to obtain
$$
(\edois, Q_d)_s \ge H(\epsilon, \epsilon) -2\lambda^2E - \frac{x_s}{\lambda}(\edois, (Q_d)_y) + G(\epsilon) - \lambda^2(a\edois, Q_d),
$$
where $H$ is some bilinear form related to $(L_-, L_+)$ and $G$ is a higher-order remainder. The last ortogonality condition guarantees that $|x_s/\lambda|\le C\delta(\alpha)\|\epsilon\|$. A precise study of the bilinear form $H$ insures that, in the subspace where $(\edois,Q_d)=(\eum,Q)=(\eum, yQ)=(\edois, Q_{dd})=0$, the form is coercive. Using this information, one obtains the following intermediate inequality
$$
(\edois, Q_d)_s\ge \tilde{\delta}_0\|\epsilon\|^2-2\lambda^2E - \frac{4}{\tilde{\delta}_0}((\eum,Q)^2 + (\edois,Q_d)^2) - \|a\|_\infty\lambda^2|(\edois, Q_d)|.
$$
Finally, one proves that $(\eum,Q)^2$ is controled by $((\eum, Q)(\edois, Q_d))_s$ and obtains the final inequality.
%
%\begin{lema}
%Existe uma constante universal $C>0$ tal que, para qualquer $t\in [0,T]$, $\delta\|\epsilon\|^2\ge C$.
%\end{lema}
%\begin{proof}
%Como $\epsilon(t)=\lambda(t)^{1/2}e^{i\theta(t)}u(\lambda(\cdot-x(t)),t)-Q$, temos
%\be
%\|\epsilon\|\ge \|\nabla\epsilon\|_2 \ge |\|\nabla(\lambda(t)^{1/2}e^{i\theta(t)}u(\lambda(\cdot-x(t)),t))\|_2-\|\nabla Q\|_2|.
%\ee
%Como $\|\nabla(\lambda(t)^{1/2}e^{i\theta(t)}u(\lambda(\cdot-x(t)),t))\|_2=\lambda(t)\|\nabla u(t)\|_2$ e $|\lambda(t)\|\nabla u(t)\|_2 - \|Q\|_2|<\delta(\alpha)\|Q\|_2 $, obtemos
%\be
%\|\epsilon\|\ge |\|Q\|_2-\|\nabla Q\|_2| - \delta(\alpha)\|Q\|_2 \ge C',
%\ee
%para $\alpha$ pequeno.
%\end{proof}
%
%Pelo lema ???, para $a$ pequeno, temos $2\lambda^2E<C/2$. Devido à hipótese de quase monotonia, para $4\lambda^2(0)<C/2$ (note-se que esta restrição depende exclusivamente da constante $C$), temos $\lambda^2(t)<C/2$, para qualquer $t\in [0,T]$. Assim sendo, podemos reescrever a desigualdade do lema \ref{asem(e1,Q)} como
\begin{nota}\label{nota}
Due to the hypothesis over the interval $[0,T]$, it is possible to simplify the previous inequality:
\begin{enumerate}
\item Since $2E\lambda^2\le 2E\lambda^{3/2}\lambda^{1/2}\le \frac{\delta_0}{2}\|\epsilon\|^2$ over $[0,T]$, we obtain
$$
\delta_0\|\epsilon\|^2 - 2\lambda^2E\ge \frac{\delta_0}{2}\|\epsilon\|^2;
$$
\item On the other hand, using (H4), $\lambda(t)\le 2\lambda(0)=2m, \forall\ t\in[0,T]$. Therefore $\lambda$ is bounded on $[0,T]$ by a constant $L$ that only depends on $m$ and, for small $\|a\|_\infty$,
$$
\left|\|a\|_\infty\lambda^2(\edois,Q_d)((\edois,Q_d)+2(\eum,Q))\right|\le \|a\|_\infty L^2C\|\epsilon\|^2\le \frac{\delta_0}{4}\|\epsilon\|^2.
$$
\end{enumerate}
In this way, one obtains the following inequality:
\ben\label{semenergia}
\left[\left(1+\frac{1}{4\delta_0}(\epsilon_1,Q)\right)(\epsilon_2,Q_d)\right]_s\ge \frac{\delta_0}{4}\|\epsilon\|^2 - \frac{1}{\delta_0}(\epsilon_2,Q_d)^2.
\een
\end{nota}
%\begin{nota}
%Compare-se a desigualdade acima com (???). No caso sem amortecimento, temos presente a energia, e não é evidente que possamos removê-la da equação. No entanto, no caso amortecido, já parece ser possível essa remoção, o que aparentemente seria ainda mais difícil, pois a energia não é conservada. Primeiro, temos de ter em conta que estamos a trabalhar com um dado inicial particular; segundo, as hipóteses colocadas no intervalo $[0,T]$ permitem contornar eficazmente a não-conservação da energia.
%\end{nota}

We now turn to the inequality analogous to (\ref{desigualdadeintegral}). The terms associated to the damping parameter turn out to be irrelevant, since their integral over the set $[0,S]$ is bounded by a function of $\|a\|_\infty$ which converges to 0 when $\|a\|_\infty\to 0$.

\begin{lema}
For small $a$ and $\alpha$, one has, over the interval $[0,S]$, $(\edois, Q_d)>0$ and 
\ben\label{desigualdadeintegralamort}
3\int_{s_1}^{s_2}(\edois,Q_d) ds - C(\delta_0)\delta(\alpha)\le -\|yQ\|_2^2\log \frac{\lambda(s_2)}{\lambda(s_1)} \le 5\int_{s_1}^{s_2}(\edois,Q_d) ds + C(\delta_0)\delta(\alpha).
\een
\end{lema}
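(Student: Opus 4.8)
The plan is to adapt the proof of the $a\equiv0$ estimate (\ref{desigualdadeintegral}) (part of Lemma~3, from \cite{merleraphael1}) to the present situation, treating the two extra terms $-a\lambda^2\eum$, $-a\lambda^2\edois$ of (\ref{aeqeum})--(\ref{aeqedois}) as perturbations whose $s$-integrals are small because of the a priori bound $T<2$ established above and the identity $ds=\lambda^{-2}\,dt$. First, for the positivity of $(\edois,Q_d)$ on $[0,S]$: set $g(s)=\big(1+\tfrac{1}{4\delta_0}(\eum,Q)\big)(\edois,Q_d)$; since $|(\eum,Q)|\le C\|\epsilon\|\le C\delta(\alpha)$, for $\alpha$ small the prefactor lies in $(1/2,3/2)$, so $g$ has the sign of $(\edois,Q_d)$, $(\edois,Q_d)^2\le 4g^2$, and $g(0)>0$ by the choice of $u_0$. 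The simplified virial inequality (\ref{semenergia}) gives $g_s\ge\tfrac{\delta_0}{4}\|\epsilon\|^2-\tfrac{1}{\delta_0}(\edois,Q_d)^2\ge-\tfrac{4}{\delta_0}g^2$, and a comparison argument (as in \cite{merleraphael1}) shows that a supersolution of $h_s\ge-\tfrac{4}{\delta_0}h^2$ starting strictly positive cannot vanish; hence $(\edois,Q_d)>0$ on $[0,S]$.

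For the integral inequality, project (\ref{aeqeum}) onto $y^2Q$ and use the identity $L_-(y^2Q)=-4Q_d$, the orthogonality conditions, the momentum bound $|(\edois,Q_y)|\le2\delta(\alpha)\|\epsilon\|$ (which, via the crude modulation bounds, controls $|x_s/\lambda|$ and $|\tilde\theta_s|$ by $C(\|\epsilon\|+\|a\|_\infty\lambda^2)$), and the parity cancellation $(Q_y,y^2Q)=0$; since $(Q_d,y^2Q)=-\|yQ\|_2^2$ this yields
\begin{equation*}
-\|yQ\|_2^2\,\frac{\lambda_s}{\lambda}=4(\edois,Q_d)+\partial_s(\eum,y^2Q)+\mathcal E(s),\qquad |\mathcal E(s)|\le C\|\epsilon\|^2+C\|a\|_\infty\lambda^2\|\epsilon\|.
\end{equation*}
Integrating over $[s_1,s_2]$: the telescoping term is bounded by $C\delta(\alpha)$ (as $|(\eum,y^2Q)|\le C\|\epsilon\|\le C\delta(\alpha)$); the $\|\epsilon\|^2$ part of $\mathcal E$ is handled by the virial bound
\begin{equation*}
\int_{s_1}^{s_2}\|\epsilon\|^2\,ds\le C(\delta_0)\delta(\alpha)\Big(1+\int_{s_1}^{s_2}(\edois,Q_d)\,ds\Big),
\end{equation*}
obtained by integrating (\ref{semenergia}), using $|g|\le C\delta(\alpha)$ and $(\edois,Q_d)^2\le C\delta(\alpha)(\edois,Q_d)$ (valid by positivity and $(\edois,Q_d)\le C\|\epsilon\|$), so that $C\int\|\epsilon\|^2\le\delta(\alpha)\int_{s_1}^{s_2}(\edois,Q_d)\,ds+C(\delta_0)\delta(\alpha)$ for $\alpha$ small; finally, the $\|a\|_\infty\lambda^2\|\epsilon\|$ part, via $ds=\lambda^{-2}\,dt$ and $\|\epsilon\|\le\delta(\alpha)$, is at most $C\|a\|_\infty\delta(\alpha)\int_{t(s_1)}^{t(s_2)}dt\le C\|a\|_\infty\delta(\alpha)\,T\le 2C\|a\|_\infty\delta(\alpha)$, using $T<2$. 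Collecting the terms, $-\|yQ\|_2^2\log\frac{\lambda(s_2)}{\lambda(s_1)}$ differs from $4\int_{s_1}^{s_2}(\edois,Q_d)\,ds$ by at most $\delta(\alpha)\int_{s_1}^{s_2}(\edois,Q_d)\,ds+C(\delta_0)\delta(\alpha)+2C\|a\|_\infty\delta(\alpha)$, and choosing first $\alpha$ and then $\|a\|_\infty$ small (so that $4-\delta(\alpha)\ge3$, $4+\delta(\alpha)\le5$ and $2C\|a\|_\infty\delta(\alpha)\le C(\delta_0)\delta(\alpha)$) gives (\ref{desigualdadeintegralamort}).

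The main obstacle is the uniform-in-$(s_1,s_2)$ bookkeeping of the remainder $\mathcal E$: the intervals $[s_1,s_2]$ may be arbitrarily long, so $\mathcal E$ must be split into a part that, after integration, is proportional to $\int(\edois,Q_d)\,ds$ (producing the window $[3,5]$ around the coefficient $4$) and a part with finite total mass, the latter extracted from the virial inequality. This is exactly the mechanism of \cite{merleraphael1}; the only genuinely new feature here is verifying that the extra damping term falls into the second category, which is what the bound $T<2$ — itself a consequence of the interval-length hypothesis (H3) — guarantees.
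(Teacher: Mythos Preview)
Your proposal is correct and follows essentially the same approach as the paper. For the positivity of $(\edois,Q_d)$ you use an ODE comparison with $g_s\ge -\tfrac{4}{\delta_0}g^2$, whereas the paper argues more directly that at a first zero of $(\edois,Q_d)$ one would have $(\edois,Q_d)_s\le 0$, and then (\ref{semenergia}) forces $\|\epsilon\|^2\le 0$; both arguments rest on (\ref{semenergia}) and are equally valid. For the integral inequality the two proofs coincide: project (\ref{aeqeum}) on $y^2Q$, run the Merle--Rapha\"el bookkeeping for the non-damping terms, and absorb the new damping contribution via $\int_{s_1}^{s_2}\|a\|_\infty\lambda^2\,\|\epsilon\|\,ds\le \delta(\alpha)\|a\|_\infty\int_0^S\lambda^2\,ds=\delta(\alpha)\|a\|_\infty T\le 2\delta(\alpha)\|a\|_\infty$, exactly the mechanism you describe.
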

\begin{proof}
Since $(\edois,Q_d)(0)>0$, it is enough to check that, if $(\edois,Q_d)=0$, then $(\edois,Q_d)_s>0$. If there exists $s\ge0$ such that $(\edois,Q_d)=0$ and $(\edois,Q_d)_s\le0$, then, by (\ref{semenergia}), 
\be
\|\epsilon\|^2\le 0,
\ee
which is absurd. Therefore $(\edois,Q_d)>0$ on $[0,S]$.
To obtain the integral inequality, we proceed as in \cite{merleraphael1}. The problem is controlling the terms associated with $a$. For example, by taking the $L^2$ inner product of (\ref{aeqeum}) with $y^2Q$ and integrating, one obtains the term
\be
\int_{s_1}^{s_2} \|a\|_\infty \lambda^2(\eum,y^2Q) ds.
\ee
However, simply notice that
\be
\left|\int_{s_1}^{s_2} \|a\|_\infty \lambda^2(\eum,y^2Q) ds\right| \le \delta(\alpha)\|a\|_\infty\int_0^S \lambda^2(s)ds = \delta(\alpha)\|a\|_\infty T\le 2\delta(\alpha)\|a\|_\infty.
\ee
Therefore, for small $\|a\|_\infty$, we deduce $\left|\int_{s_1}^{s_2} a \lambda^2(\eum,y^2Q) ds\right|\le \delta(\alpha)$. The remainder terms are controlled in a similar way.
\end{proof}

Using the previous result, we prove a stronger quasi-monotony property than the one in the definition of $X$:
\begin{lema}
For small $\alpha$,
\be
\lambda(\tilde{t})<\frac{3}{2}\lambda(t), \quad T\ge\tilde{t}\ge t\ge0.
\ee
\end{lema}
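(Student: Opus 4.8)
The plan is to read this sharper quasi-monotonicity off directly from the integral inequality \eqref{desigualdadeintegralamort} just established, using crucially that $(\edois,Q_d)$ keeps a fixed sign on $[0,S]$. This is exactly the mechanism by which \eqref{quasemonotonia} was obtained in the homogeneous case; the only new ingredient is that here $\delta(\alpha)$ may be taken as small as we wish, so the factor $2$ can be sharpened to $\frac{3}{2}$, which is precisely the improvement over (H4) that the bootstrap needs.

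First I would observe that $s(t)=\int_0^t\lambda^{-2}(\tau)\,d\tau$ is a strictly increasing bijection of $[0,T]$ onto $[0,S]$ (since $\lambda>0$), so that $T\ge\tilde t\ge t\ge0$ becomes $S\ge s(\tilde t)\ge s(t)\ge0$, and the abuse of notation $\lambda(\tilde t)=\lambda(s(\tilde t))$ is harmless. Applying the left-hand inequality in \eqref{desigualdadeintegralamort} with $s_1=s(t)$ and $s_2=s(\tilde t)$ gives
\be
3\int_{s(t)}^{s(\tilde t)}(\edois,Q_d)\,ds-C(\delta_0)\delta(\alpha)\le -\|yQ\|_2^2\log\frac{\lambda(\tilde t)}{\lambda(t)}.
\ee
By the previous lemma, $(\edois,Q_d)>0$ on $[0,S]$, and since $s(\tilde t)\ge s(t)$ the integral on the left is non-negative; hence the left-hand side is bounded below by $-C(\delta_0)\delta(\alpha)$, which yields
\be
\log\frac{\lambda(\tilde t)}{\lambda(t)}\le \frac{C(\delta_0)}{\|yQ\|_2^2}\,\delta(\alpha),\qquad\text{equivalently}\qquad \lambda(\tilde t)\le \lambda(t)\,\exp\!\left(\frac{C(\delta_0)}{\|yQ\|_2^2}\,\delta(\alpha)\right).
\ee

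To finish, since $C(\delta_0)$ and $\|yQ\|_2$ are universal constants and $\delta(\alpha)\to0$ as $\alpha\to0$, I would fix $\alpha$ small enough that $\exp\!\big(C(\delta_0)\|yQ\|_2^{-2}\delta(\alpha)\big)<\frac{3}{2}$; then $\lambda(\tilde t)<\frac{3}{2}\lambda(t)$ for all $T\ge\tilde t\ge t\ge0$, as claimed. I do not anticipate any real obstacle here: this is the ``easy half'' of the quasi-monotonicity, because controlling the forward growth of $\lambda$ only requires a lower bound on $-\|yQ\|_2^2\log(\lambda(s_2)/\lambda(s_1))$, and the positivity of $(\edois,Q_d)$ supplies that lower bound for free, up to the $O(\delta(\alpha))$ error already present in \eqref{desigualdadeintegralamort}. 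The single point that must be checked is that the smallness threshold for $\alpha$ depends only on universal constants, and not on $a$, on $m$, or on the particular initial datum, so that it is compatible with all the other smallness requirements entering the bootstrap argument.
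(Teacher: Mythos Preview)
Your proof is correct and follows essentially the same approach as the paper: both use the left-hand inequality in \eqref{desigualdadeintegralamort} together with the positivity of $(\edois,Q_d)$ on $[0,S]$ to bound $\log\big(\lambda(\tilde t)/\lambda(t)\big)$ by $C(\delta_0)\|yQ\|_2^{-2}\delta(\alpha)$, and then take $\alpha$ small. The paper phrases it as a contradiction argument while you argue directly, but the content is identical.
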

\begin{proof}
If such an inequality was not true for some $t_1<t_2$, then, by (\ref{desigualdadeintegralamort})
$$
\|yQ\|_2^2\log\frac{3}{2} - C(\delta_0)\delta(\alpha)\le \|yQ\|_2^2\log\frac{\lambda(s_2)}{\lambda(s_1)} - C(\delta_0)\delta(\alpha) \le -3\int_{s_1}^{s_2}(\edois, Q_d) ds <0, 
$$
which is absurd, for small enough $\alpha$.
\end{proof}

Since the term $a\lambda^2$ is bounded by a small constant, one may apply a reasoning similar to remark 3.2 to prove a result completely analogous to the first part of lemma \ref{refinada}:
\begin{lema}
There exist universal constants $\tilde{\delta}_0>0$ e $C>0$ such that, for $m$ and $\alpha$ small,
\ben\label{refinadoamort}
\left[\left(1+\frac{(\eum, W_d)}{\|yQ\|_2^2}\right)(\edois,Q_d)\right]_s + C(\edois,Q_d)^4 \ge \tilde{\delta}_0\|\tilde{\epsilon}\|^2 - \lambda^2E, \ \forall \ s>0
\een
\end{lema}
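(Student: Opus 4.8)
The plan is to repeat, essentially line by line, the proof of the first part of Lemma~\ref{refinada} given in \cite{merleraphael1} (and reproduced in \cite{darwich}), now starting from the perturbed system \eqref{aeqeum}--\eqref{aeqedois}, and then to check that the extra damping terms are negligible. One introduces the refined variable $\tilde\epsilon=\epsilon+i\frac{(\edois,Q_d)}{\|yQ\|_2^2}W$, with $W=y^2Q+\nu Q$ and $\nu$ chosen so that $(W,Q_{dd})=0$, differentiates $\big[\big(1+\frac{(\eum,W_d)}{\|yQ\|_2^2}\big)(\edois,Q_d)\big]$ in $s$, substitutes \eqref{aeqeum}--\eqref{aeqedois}, and uses the three orthogonality conditions together with the Weinstein identities $L_+Q_d=-2Q$, $L_-(y^2Q)=-4Q_d$, $L_-Q=0$, $L_+Q_y=0$ — all of which involve $Q$ only and are therefore unaffected by the perturbation. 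The role of the prefactor $1+\frac{(\eum,W_d)}{\|yQ\|_2^2}$ and of the shift $W$ is, exactly as in the undamped case, to convert the leading $\frac{\lambda_s}{\lambda}$--contribution into the sharp coercive quadratic form of $L=(L_+,L_-)$ evaluated at $\tilde\epsilon$; and the control $|(\edois,Q_y)|\le 2\delta(\alpha)\|\epsilon\|$ obtained above plays here the role that $(\edois,Q_y)\equiv 0$ plays in \cite{merleraphael1} when bounding the terms involving $\frac{x_s}{\lambda}$.

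Carried out in full, this yields an inequality of the form
\ben
\Big[\Big(1+\tfrac{(\eum,W_d)}{\|yQ\|_2^2}\Big)(\edois,Q_d)\Big]_s+C(\edois,Q_d)^4 \ge \tilde\delta_0\|\tilde\epsilon\|^2 - \lambda^2E - \mathcal R_a,
\een
identical to the one of \cite{merleraphael1} (the $+C(\edois,Q_d)^4$ on the left arising, as there, from controlling the degenerate direction $(\eum,Q)^2$ by an exact $s$--derivative plus $C(\edois,Q_d)^4$, which one folds into the left--hand side) except for the remainder $\mathcal R_a$, which collects all the terms carrying the damping. By the structure of \eqref{aeqeum}--\eqref{aeqedois} the perturbation enters the $\epsilon$--system only through the two terms $-a\lambda^2\epsilon_1$ and $-a\lambda^2\epsilon_2$, so every summand of $\mathcal R_a$ is $\lambda^2$ times a pairing $(a\epsilon_j,\psi)$ — possibly multiplied by a bounded factor such as $(\edois,Q_d)$ — where $\psi$ is a fixed function built from $Q$ and its derivatives (hence $|(a\epsilon_j,\psi)|\le C\|a\|_\infty\|\epsilon\|$) and the rescaled coefficient $a(\lambda y+x(s))$ and its $y$--derivative $\lambda\,a'(\lambda y+x(s))$ are bounded by $\|a\|_{W^{1,\infty}}$. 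Since (H4) gives $\lambda(s)\le 2\lambda(0)=2m$ on $[0,S]$, one has $\lambda^2\le L^2$ with $L=L(m)$ fixed, whence $|\mathcal R_a|\le C\|a\|_{W^{1,\infty}}L^2\big(\|\epsilon\|+\|\epsilon\|^2\big)$; after $m$ is fixed, shrinking $\|a\|_{W^{1,\infty}}$ makes this as small as we please.

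It remains to absorb $\mathcal R_a$. Its quadratic part is absorbed into $\tilde\delta_0\|\tilde\epsilon\|^2$ for $\|a\|_{W^{1,\infty}}$ small, exactly as the term $\|a\|_\infty\lambda^2|(\edois,Q_d)((\edois,Q_d)+2(\eum,Q))|$ was absorbed in Remark~\ref{nota}; its linear part — essentially $-\lambda^2(a\epsilon_2,Q_d)$ — is carried along and quenched in the same way the remainder $\|a\|_\infty\lambda^2|(\edois,Q_d)|$ is handled in the passage from the intermediate inequality to Lemma~\ref{asem(e1,Q)} (it acquires its missing $\epsilon$--factor precisely when $(\eum,Q)^2$ is traded for the exact $s$--derivative producing the $C(\edois,Q_d)^4$ term). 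Since $\|a\|_{W^{1,\infty}}L^2$ can be made arbitrarily small, the universal constant $\tilde\delta_0$ survives, halved at worst. (The term $-\lambda^2E$ is kept as stated; should the computation produce $-2\lambda^2E$ instead, the excess is $\le 0$ when $E(s)\le 0$ and, when $E(s)>0$, it is $\le\delta\|\epsilon\|^2$ by the bound $E(t)\lambda^{3/2}(t)<\delta$ together with (H5), hence again absorbable into the coercive term, as in Remark~\ref{nota}.) This establishes \eqref{refinadoamort}. The only point requiring genuine care is the bookkeeping of $\mathcal R_a$: one must make sure that every damping contribution inherits the factor $\lambda^2\le L^2$ — which it does, since the perturbation enters \eqref{aeqeum}--\eqref{aeqedois} only through $a\lambda^2\epsilon_j$ — and that the linear--in--$\epsilon$ damping remainders do not destroy the coercive lower bound, which follows from the very manipulation already used in the undamped proof.
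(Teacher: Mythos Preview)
Your proposal is correct and follows the same route the paper indicates: the paper does not give a detailed proof here but simply observes that, since the damping enters \eqref{aeqeum}--\eqref{aeqedois} only through $a\lambda^2\epsilon_j$ and $\lambda\le 2m$ by (H4), the term $a\lambda^2$ is bounded by a small constant, so that ``a reasoning similar to Remark~\ref{nota}'' applies to the refined virial computation of \cite{merleraphael1}. Your write-up is precisely that reasoning spelled out --- the identification of $\mathcal R_a$, the bound $|\mathcal R_a|\le C\|a\|_{W^{1,\infty}}L^2(\|\epsilon\|+\|\epsilon\|^2)$, and its absorption into $\tilde\delta_0\|\tilde\epsilon\|^2$ --- so there is no genuine difference in approach, only in the level of detail.
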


Now we prove the following
\begin{prop}
There exist universal constants $B', \sigma>0$ such that, for small $\alpha$ and $m$, 
\ben\label{controloexpamort}
\lambda(0)^{2\sigma}\lambda(s)^2\le \exp\left(-\frac{B}{(\edois,Q_d)^2(s)}\right), \  \ 0\le s\le S.
\een
\end{prop}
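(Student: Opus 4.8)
\textit{Proof plan.}\quad Write $v(s)=(\edois,Q_d)(s)$, which is strictly positive on $[0,S]$. After taking logarithms, the asserted inequality is equivalent to the lower bound
\[
v(s)^{2}\big(-\log\lambda(s)-\sigma\log\lambda(0)\big)\ \ge\ \frac{B}{2},\qquad 0\le s\le S .
\]
At $s=0$ this is precisely the standing hypothesis $\lambda(0)^{2}\le e^{-B/v(0)^{2}}$, i.e. $v(0)^{2}(-\log\lambda(0))\ge B/2$, to which the extra nonnegative term $\sigma v(0)^{2}\log(1/m)$ is added as a reserve. The plan is to propagate this inequality from $s=0$ over all of $[0,S]$ by running the proof of the undamped estimate \eqref{controloexp} (part~2 of Lemma~\ref{refinada}); the reserve, of size $\sim\sigma\log(1/m)$, is there precisely to absorb the errors produced by the inhomogeneous damping on $[0,S]$.

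Following \cite{merleraphael1}, the relevant object is the Lyapunov-type functional
\[
G(s)\ :=\ -\log\lambda(s)\ -\ \frac{B}{2\,(f(s)\,v(s))^{2}},\qquad f\ :=\ 1+\frac{(\eum,W_d)}{\|yQ\|_2^{2}}\ =\ 1+O(\delta(\alpha)),
\]
the $f$-correction being natural because it is $(fv)_s$, not $v_s$, that \eqref{refinadoamort} controls (replacing $B$ by a factor $1+O(\delta(\alpha))$, which I suppress). Differentiating, estimating $(fv)_s$ from below by \eqref{refinadoamort}, $(fv)_s+Cv^{4}\ge\tilde\delta_0\|\tilde\epsilon\|^{2}-\lambda^{2}E$, and comparing $-\lambda_s/\lambda$ with $v$ through \eqref{desigualdadeintegralamort}, one reproduces the undamped computation and obtains $G_s\ge-\mathcal R(s)$, where $\mathcal R\ge 0$ collects, beyond the modulation remainders (which, as in \cite{merleraphael1}, are reabsorbed at no net cost), exactly two new contributions. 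The first is the term $-\lambda^{2}E$ of \eqref{refinadoamort}, which in the undamped case was the favourable $+\lambda^{2}|E_0|\ge0$; by the bound $E(t)\lambda^{3/2}(t)<\delta$ established above we have $|\lambda^{2}E|=\lambda^{1/2}|E\lambda^{3/2}|\le\delta\lambda^{1/2}\le\delta(2m)^{1/2}$, an $O(\delta)$ perturbation. The second comes from the damping terms $-a\lambda^{2}\eum$ and $-a\lambda^{2}\edois$ in \eqref{aeqeum}--\eqref{aeqedois}, which enter the $s$-evolution of both $v$ and $f$; each such term carries a factor $\lambda^{2}$, so the part of $\mathcal R$ it produces is bounded by $C\|a\|_\infty\lambda^{2}$, and since $\int_0^S\lambda^{2}\,ds=T\le 2$ (by the Remark following the definition of $X$) its integral over $[0,S]$ is $\le C\|a\|_{W^{1,\infty}}$. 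Hence $\int_0^S\mathcal R\,ds\le C(\|a\|_{W^{1,\infty}}+\delta+\delta(\alpha))$.

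Integrating $G_s\ge-\mathcal R$ over $[0,s]$, and using that $G(0)\ge-C\delta(\alpha)|\log\lambda(0)|$ (again because $f(0)=1+O(\delta(\alpha))$ and $\tfrac{B}{2v(0)^{2}}\le|\log\lambda(0)|$ by hypothesis), we obtain
\[
G(s)\ \ge\ -C\big(\|a\|_{W^{1,\infty}}+\delta+\delta(\alpha)\big)\big(1+|\log\lambda(0)|\big)\ \ge\ -\sigma|\log\lambda(0)|,
\]
the last inequality holding, for a fixed universal $\sigma>0$, as soon as $\alpha$ and $\|a\|_{W^{1,\infty}}$ (hence $\delta$) are small and $m$ is small enough that $|\log\lambda(0)|\ge1$. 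Rearranging $G(s)\ge\sigma\log\lambda(0)$ and using $f(s)^{-2}=1+O(\delta(\alpha))$ gives $\lambda(0)^{2\sigma}\lambda(s)^{2}\le e^{-B/v(s)^{2}}$ on $[0,S]$, which is the claim.

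The step I expect to be the main obstacle is the inequality $G_s\ge-\mathcal R$ with $\mathcal R$ of the stated size, i.e. carrying out the undamped argument of \cite{merleraphael1} precisely enough to see that the damping does no harm. The subtlety is that differentiating $(fv)^{-2}$ produces a factor $(fv)^{-3}$, so an error of nominal size $\|a\|_\infty\lambda^{2}\|\epsilon\|+\delta\lambda^{1/2}$ in $(fv)_s$ gets divided by $v^{3}$ and could blow up as $v\to0$; $G_s$ cannot be bounded term by term. The resolution, exactly as in \cite{merleraphael1}, is structural: the dangerous leading part of $(\edois,Q_d)_s$ is $2(\eum,Q)$ — which one reads off \eqref{aeqedois} using $L_+Q_d=-2Q$, $(Q,Q_d)=0$ and the orthogonality $(\eum,Q_d)=0$ — and this is exactly the quantity that the correction $f$ together with the refinement $\tilde\epsilon=\epsilon+i\tfrac{(\edois,Q_d)}{\|yQ\|_2^{2}}W$ (with $(W,Q_d)=-\|yQ\|_2^{2}$, $(W,Q_{dd})=0$) are designed to cancel, so that $G$ does not feel the $v^{-3}$ singularity. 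What must still be checked, and is the technical heart of the proof, is that the genuinely new terms $-a\lambda^{2}\eum$, $-a\lambda^{2}\edois$ survive these algebraic manipulations still carrying a factor $\lambda^{2}$, so that their total contribution to $\int_0^S\mathcal R$ remains controlled by $\|a\|_{W^{1,\infty}}\int_0^S\lambda^{2}\,ds\le 2\|a\|_{W^{1,\infty}}$; with that in hand the argument closes as above.
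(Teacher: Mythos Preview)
Your Lyapunov-functional route via $G(s)=-\log\lambda(s)-\tfrac{B}{2(fv)^2}$ is a genuinely different strategy from the paper's, and while it may be salvageable, it manufactures a difficulty that the paper's argument avoids altogether. The paper never differentiates $(fv)^{-2}$. Instead it decouples the problem into two soft steps. First, from \eqref{refinadoamort} together with Remark~\ref{nota} one extracts the clean differential inequality $f_s+C'f^{4}\ge 0$ for $f=(1+\|yQ\|_2^{-2}(\eum,W_d))(\edois,Q_d)$ (no $v^{-3}$ anywhere), which integrates to $f(s)^{-3}\le C's+f(0)^{-3}$ and hence $v(s)\ge \tfrac12\big(C's+f(0)^{-3}\big)^{-1/3}$. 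Second, one feeds this pointwise lower bound on $v$ into the \emph{integral} inequality \eqref{desigualdadeintegralamort}: integrating it over $[0,s]$ and comparing with $3\int_0^s v\le -\|yQ\|_2^2\log(\lambda(s)/\lambda(0))+C\delta(\alpha)$ yields $\big(C's+f(0)^{-3}\big)^{2/3}\lesssim -\log\lambda(s)+\log\lambda(0)+f(0)^{-2}$. Since also $v(s)^{-2}\le 4\big(C's+f(0)^{-3}\big)^{2/3}$, one gets $\tfrac{C''}{4v(s)^{2}}\le -\log\lambda(s)+\log\lambda(0)+C''f(0)^{-2}$, and the standing hypothesis $\lambda(0)^2\le e^{-B/v(0)^2}$ absorbs $C''f(0)^{-2}$ into $-(\sigma+1)\log\lambda(0)$ for a universal~$\sigma$.

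Your plan, by contrast, needs two ingredients that are not in hand. You invoke \eqref{desigualdadeintegralamort} to ``compare $-\lambda_s/\lambda$ with $v$'', but that is an integral statement and gives no pointwise control on $G_s$; the pointwise modulation estimate on $\lambda_s/\lambda$ has an $O(\|\epsilon\|^2)$ error which, once divided by $v^{3}$ in $G_s$, is exactly the singularity you flag. You then appeal to the structural cancellations of \cite{merleraphael1} to tame that singularity---but those cancellations are precisely what produce the inequality $f_s+Cf^4\ge 0$ in the first place, and once that inequality is available there is no reason to pass back through $v^{-3}$ by differentiating $G$. In short: the gap you identify is real, and the paper's two-step argument (ODE for $f$, then the integral comparison) is the natural way to close it rather than a detour around it.
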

\begin{nota}
The above inequality is equivalent to
\ben\label{maiorlog}
(\edois,Q_d)(s)\ge \frac{B^*}{|\log \left(\lambda(0)^\sigma\lambda(s)\right)|^{1/2}}.
\een
\end{nota}
\begin{proof}
What follows is an adaptation of the proof for the proposition 8 in \cite{merleraphael1}.
Define 
\ben\label{expressaof}
f(s)=\left(1+\frac{(\eum,W_d)}{\|yQ\|_2^2}\right)(\edois,Q_d). 
\een
For small $\alpha>0$,
\be
\frac{1}{2}(\edois,Q_d)\le f \le 2(\edois,Q_d).
\ee
Then $f>0$ for $s\in[0,S]$ and, using (\ref{refinadoamort}), there exists a universal constant $C'>0$ such that (see remark \ref{nota}.1)
\be
f_s+C'f^4\ge0.
\ee
Integrating this inequality over $[0,s]$, we obtain
\be
\frac{1}{f^3(s)}\le C's+\frac{1}{f^3(0)},
\ee
and from \eqref{expressaof}, we deduce
\ben\label{scuboamort}
(\edois,Q_d)(s)\ge \frac{1}{2\left(C's+\frac{1}{f^3(0)}\right)^{1/3}},\ \forall s>0.
\een
Now, from (\ref{desigualdadeintegralamort}) and \eqref{scuboamort}, we obtain
\be
3\int_0^s (\edois,Q_d)ds \le -\|yQ\|_2^2\log\frac{\lambda(s)}{\lambda(0)} + C(\delta_0)\delta(\alpha)\le -\frac{\|yQ\|_2^2}{2}\log\frac{\lambda(s)}{\lambda(0)}
\ee
and
\be
C''\left(\left(C's+\frac{1}{f^3(0)}\right)^{2/3}-\frac{1}{f^2(0)}\right)\le -\log\frac{\lambda(s)}{\lambda(0)}.
\ee
Hence, 
\ben\label{loglambda}
\frac{C''}{4(\edois,Q_d)^2}\le -\log \lambda(s) + \log \lambda(0) + \frac{C''}{f^2(0)}.
\een
Since $\lambda(0)\le e^{-\frac{B}{(\edois,Q_d)^2(0)}}$, there exists $\sigma>0$ universal constant such that
$$
 \frac{C''}{f^2(0)}\le -(\sigma+1)\log\lambda(0)
$$
and from \eqref{loglambda},
$$
\frac{C''}{4(\edois,Q_d)^2}\le -\log \left(\lambda(0)^\sigma\lambda(s)\right).
$$
Therefore, there exists an universal constant $B'>0$ such that
\be
- \log \left(\lambda(0)^{2\sigma}\lambda^2(s)\right)\ge -2\log \left(\lambda(0)^\sigma\lambda(s)\right) \ge \frac{B'}{(\edois,Q_d)^2(s)}, 
\ee
or, equivalently,
$$
\lambda(0)^{2\sigma}\lambda(s)^2\le \exp\left(-\frac{B'}{(\edois,Q_d)^2(s)}\right), \forall\ s\in [0,S].
$$
\end{proof}
\begin{lema}
There exists an universal constant $D$ such that, for each $k_0\le k\le k_T$,
$$
t_{k+1}-t_k\le D\left|\log\left(\lambda(0)^\sigma\lambda(t_k)\right)\right|^{1/2}\lambda^2(t_k)
$$
\end{lema}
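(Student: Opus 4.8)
The plan is to pass to the rescaled time $s$, use the strong quasi-monotony of $\lambda$ to freeze $\lambda$ on $[t_k,t_{k+1}]$, and then estimate $s_{k+1}-s_k$ by combining the pointwise lower bound on $(\edois,Q_d)$ coming from \eqref{maiorlog} with the integral inequality \eqref{desigualdadeintegralamort}.

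First, since $dt=\lambda^2(s)\,ds$, one has $t_{k+1}-t_k=\int_{s_k}^{s_{k+1}}\lambda^2(s)\,ds$. Fix $s\in[s_k,s_{k+1}]$ and let $t$ be the corresponding original time, so that $t_k\le t\le t_{k+1}$. Applying the strong quasi-monotony lemma ($\lambda(\tilde t)<\tfrac32\lambda(t)$ for $\tilde t\ge t$) once with the pair $(t,t_k)$ and once with the pair $(t_{k+1},t)$, and recalling that $\lambda(t_{k+1})\ge\tfrac12\lambda(t_k)$, I obtain
$$
\tfrac13\lambda(t_k)<\lambda(s)<\tfrac32\lambda(t_k),\qquad s\in[s_k,s_{k+1}].
$$
In particular $t_{k+1}-t_k\le\tfrac94\lambda^2(t_k)\,(s_{k+1}-s_k)$, so everything reduces to bounding $s_{k+1}-s_k$.

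Next, I feed this two-sided bound on $\lambda(s)$ into \eqref{maiorlog}. Since $m=\lambda(0)$ is small and $\lambda(t_k)\le 2m$ by (H4), the product $\lambda(0)^\sigma\lambda(t_k)\le 2m^{\sigma+1}$ is as small as we wish; hence for $m$ small enough $|\log(\lambda(0)^\sigma\lambda(t_k))|\ge\log 3$, and the two-sided bound above gives $|\log(\lambda(0)^\sigma\lambda(s))|\le|\log(\lambda(0)^\sigma\lambda(t_k))|+\log 3\le 2\,|\log(\lambda(0)^\sigma\lambda(t_k))|$ for $s\in[s_k,s_{k+1}]$. Plugging this into \eqref{maiorlog} yields the uniform lower bound
$$
(\edois,Q_d)(s)\ge\frac{B^*}{\sqrt 2\,|\log(\lambda(0)^\sigma\lambda(t_k))|^{1/2}},\qquad s\in[s_k,s_{k+1}].
$$
On the other hand, the left-hand inequality in \eqref{desigualdadeintegralamort} on $[s_k,s_{k+1}]$, together with $\tfrac12\le\lambda(s_{k+1})/\lambda(s_k)\le 1$, gives
$$
3\int_{s_k}^{s_{k+1}}(\edois,Q_d)\,ds\le\|yQ\|_2^2\log 2+C(\delta_0)\delta(\alpha)\le C_1,
$$
where $C_1$ is a universal constant once $\alpha$ is small. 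Combining the last two displays,
$$
s_{k+1}-s_k\le\frac{\sqrt2\,C_1}{3B^*}\,\bigl|\log(\lambda(0)^\sigma\lambda(t_k))\bigr|^{1/2},
$$
and therefore $t_{k+1}-t_k\le\tfrac94\lambda^2(t_k)\cdot\tfrac{\sqrt2\,C_1}{3B^*}\bigl|\log(\lambda(0)^\sigma\lambda(t_k))\bigr|^{1/2}$, which is the claim with $D:=\tfrac{3\sqrt2\,C_1}{4B^*}$.

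The only point requiring care is the bookkeeping of constants: one must check that the additive error $\log 3$ produced by the quasi-monotony comparison of logarithms, and the term $C(\delta_0)\delta(\alpha)$ appearing in \eqref{desigualdadeintegralamort}, are both dominated by universal quantities — this is precisely where the smallness of $m$ (used to force $|\log(\lambda(0)^\sigma\lambda(t_k))|\ge1$) and of $\alpha$ enter. Since none of $B^*$, $\sigma$, $\|yQ\|_2$, $\delta_0$ depends on $k$, on the damping $a$, or on the particular solution, the resulting $D$ is indeed universal. The borderline indices $k=k_0$ and $k=k_T$, where $\lambda(t_{k+1})/\lambda(t_k)$ is only in $[\tfrac12,1]$ rather than exactly $\tfrac12$, are handled identically, the ratios entering only through the harmless bounds $\tfrac12\le\lambda(t_{k+1})/\lambda(t_k)\le 1$ already used above.
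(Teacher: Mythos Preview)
Your proof is correct and follows essentially the same approach as the paper: both combine the upper bound on $\int_{s_k}^{s_{k+1}}(\edois,Q_d)\,ds$ from \eqref{desigualdadeintegralamort} with the pointwise lower bound \eqref{maiorlog}, using quasi-monotony to compare $\lambda(s)$ with $\lambda(t_k)$ on $[s_k,s_{k+1}]$. The only cosmetic difference is that the paper changes variable back to $t$ and bounds the integrand $\bigl(\lambda^2(t)|\log(\lambda(0)^\sigma\lambda(t))|^{1/2}\bigr)^{-1}$ directly, whereas you first isolate the factor $s_{k+1}-s_k$ and then bound it; your version is in fact more carefully written, making explicit the log comparison that the paper leaves implicit.
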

\begin{proof}
First, using the quasi-monotonicity property and (\ref{desigualdadeintegralamort}),
$$2\|yQ\|_2^2\log 2\ge \|yQ\|_2^2 + C(\delta_0)\delta(\alpha) \ge 3\int_{s_k}^{s_{k+1}} (\edois, Q_d)(s)ds.
$$
Now, from (\ref{maiorlog}),
\begin{align*}
\int_{s_k}^{s_{k+1}} (\edois, Q_d)(s)ds &\ge B^*\int_{s_k}^{s_{k+1}} \frac{1}{|\log(\lambda(0)^{\sigma}\lambda(s)|^{1/2}}ds  \ge  B^*\int_{t_k}^{t_{k+1}} \frac{1}{\lambda^2(t)|\log(\lambda(0)^{\sigma}\lambda(t)|^{1/2}}dt \\ &\ge \frac{t_{k+1}-t_k}{4\lambda^2(t_k)|\log(\lambda(0)^{\sigma}\lambda(t)|^{1/2}}.
\end{align*}
The result follows from combining the two above inequalities.
\end{proof}

\textit{Proof of theorem 1}.
Since $X$ is nonempty and closed in $[0,T_a(u_0))$, $X=[0,T_a(u_0))$ iff $X$ open in $[0,T_a(u_0))$. Let $T\in X$ be arbitrary. Joining the conclusions of remark 2 and lemmas 5, 6, 9, 12 and proposition 11, one has, for small $\alpha, m$ and $\|a\|_{W^{1,\infty}}$  (chosen by this order) \footnote{Note that this choice is independent of $T$.}\newline

(\~{H}1) $T<2<\frac{1}{2\|a\|_\infty}\log\frac{\|Q\|_2^2+\alpha}{\|Q\|_2^2+\frac{\alpha}{2}}$; \newline

(\~{H}2) $E(t)<\delta(\alpha)\|u_x(t)\|^{3/2}_2< \delta(\alpha)\|u_x(t)\|^{2}_2$; \newline

(\~{H}3) For each $k_T\ge k\ge k_0$, $\ t_{k+1}-t_k\le D\left|\log\left(\lambda(0)^\sigma \lambda(t_k)\right)\right|^{1/2}\lambda^2(t_k)< \lambda^{7/4}(t_k)$; \newline

(\~{H}4) $\lambda(\tilde{t})\le \frac{3}{2}\lambda(t)<2\lambda(t)$, $\forall  \tilde{t}, t: \ T\ge \tilde{t}\ge t\ge 0$; \newline

(\~{H}5) $\lambda^{1/2}(t)\le\exp\left(-\frac{B}{4(\edois,Q_d)^2(t)}\right) < \frac{1}{2}\|\epsilon(t)\|^2$.\newline

One now applies a standart bootstrap argument since, in a neighbourhood of $T$, one has stronger conditions than those defining the set $X$. Then $X$ is open and $X=[0,T_a(u_0))$. From the definition of $X$, $T_a(u_0)\le \frac{1}{2\|a\|_\infty}\log\frac{\|Q\|_2^2+\alpha}{\|Q\|_2^2+\frac{\alpha}{2}}$, which proves finite-time blowup.
\hfill$\qed$
\vskip10pt
\textit{Proof of corollary 2.}
For the sake of simplicity, we write $T=T_a(u_0)$. Since the solution blows-up in finite time, we may define, for each $k\ge k_0$ $t_k\in [0,T)$ such that $t_k\to T$ and $\lambda(t_k)=2^{-k}$. By the previous proof, $X=[0,T)$, and so, by (\~H3),
$$
 t_{k+1}-t_k\le D\left|\log\left(\lambda(0)^\sigma \lambda(t_k)\right)\right|^{1/2}\lambda^2(t_k), \ k\ge k_0.
$$
Then, for $k$ large,
 $$
 t_{k+1}-t_k\le C\left|\log\left(\lambda(t_k)\right)\right|^{1/2}\lambda^2(t_k).
$$
Fix $n$ large. Summing in $k\ge n$,
\begin{align*}
T-t_n &\le C\sum_{k\ge n} 2^{-2k}\sqrt{k}=C \sum_{n\le k < 2n} 2^{-2k}\sqrt{k} + C\sum_{k\ge 2n} 2^{-2k}\sqrt{k}&\\
&\le C2^{-2n}\sqrt{n} + C\sum_{j\ge0} 2^{-2(j+2n)}\sqrt{2n+j} &\\&\le C2^{-2n}\sqrt{n} + C2^{-4n}\sqrt{n}\sum_{j\ge0} 2^{-2j}\sqrt{2+\frac{j}{n}}&\\
&\le C2^{-2n}\sqrt{n} +C2^{-4n}\sqrt{n}\le C2^{-2n}\sqrt{n} = C\lambda^2(t_n)|\log\lambda(t_n)|^{1/2}.
\end{align*}
Given $t$ close to $T_a(u_0)$, $t\in[t_n,t_{n+1}]$ for some large $n$. Therefore, by (H4),
\ben\label{log1}
\lambda^2(t)\left|\log\frac{\lambda(t)}{2}\right|^{1/2} \ge C\lambda^2(t_n)|\log\lambda(t_n)|^{1/2}\ge C(T-t_n).
\een
Set $g(x)=x^2|\log\frac{x}{2}|^{1/2}$. For $t$ close to $T$ and $C^*=\sqrt{C}$,
\begin{align}\label{log2}
g\left(\frac{C^*\sqrt{T-t}}{|\log(T-t)|^{1/4}}\right)&=\frac{C(T-t)}{|\log(T-t)|^{1/2}}\left|\log\frac{C^*\sqrt{T-t}}{2|\log (T-t)|^{1/4}}\right|^{1/2}\nonumber\\&
=\frac{C(T-t)}{|\log(T-t)|^{1/2}}\frac{1}{\sqrt{2}}\left|\log(T-t)-\log\left(\frac{4}{C}|\log (T-t)|^{1/2}\right)\right|^{1/2}\nonumber\\
&=\frac{C}{\sqrt{2}}(T-t)\left|1-\frac{\log\left(\frac{4}{C}|\log (T-t)|^{1/2}\right)}{\log(T-t)}\right|^{1/2}
\le C(T-t).
\end{align}
Since $g$ is nondecreasing in a neighbourhood of $0$, by \eqref{log1} and \eqref{log2}, one has, for $t$ close to $T$,
\be
\lambda(t)\ge \frac{C^*\sqrt{T-t}}{|\log(T-t)|^{1/4}},
\ee
which concludes this proof.
%Controlo do momento linear.
%
%Até agora, a lei de evolução do momento tem garantido que, caso o dado inicial tenha momento linear nulo, a solução tem momento linear nulo. Isso já não é possível nesta situação, como é visível na respectiva lei de evolução. 
%
%Usando a decomposição geométrica, podemos calcular o momento linear em função de $\epsilon$:
%$$
%\parteim \int_\real u_x(t)\ub(t)dx = \frac{1}{\lambda}\left[\parteim\left(\int_\real \epsilon_y(t)\bar{\epsilon}(t)dx\right) - 2(\edois,Q_y)\right].
%$$
%
%Caso o momento linear fosse nulo, então ter-se-ia, usando a definição \ref{definorma},
%
%O que vamos ver é que, para $\|a\|_\infty$ pequeno, a mesma desigualdade é válida no intervalo $[0,T]$. Neste intervalo, verificamos que o momento é desprezável, mais precisamente,
%$$
%\left|\lambda(t)\int_\real \parteim u_x(t)\ub(t) dx\right|\le \delta(\alpha)\|\epsilon(t)\|.
%$$
%As condições impostas no intervalo $[0,T]$ permitem obter facilmente esta desigualdade: 

\end{section}
\begin{section}{Acknowledgements}

$\indent$ I thank the Fundação Calouste Gulbenkian for the financial support, Darwich Mohamad for his availability to discuss his work and Thierry Cazenave for important improvements. Finally, I am grateful to M\'{a}rio Figueira, who suggested this problem and offered multiple and interesting points of view.
\end{section}

Adress: CMAF/UL, Av.\ Prof.\ Gama Pinto 2, 1649-003 Lisboa, Portugal

E-mail: simaofc@campus.ul.pt
\end{document}